\newtheorem{thm}{Theorem}[section]
\newtheorem{defn}{Definition}[section]
\newtheorem{lemma}{Lemma}[section]
\newtheorem{rem}{Remark}[section]
\def\ps@pprintTitle{%
 \let\@oddhead\@empty
 \let\@evenhead\@empty
 \def\@oddfoot{} %
 \let\@evenfoot\@oddfoot
 }
\begin{document}
\begin{frontmatter}
\title{Stability Analysis of  Fractional Order Memristor Synapse-coupled Hopfield Neural Network with Ring Structure}
\author[add1]{Leila Eftekhari}
\address[add1]{Department of Mathematics,  Tarbiat Modares University, Tehran, IR 14117-13116}
\author[add2]{Mohammad M. Amirian\corref{cor1}}
\ead{M.Amiriammatlob@dal.ca}
\address[add2]{Department of Mathematics and Statistics, Dalhousie University, Halifax, NS, CA B3H4R2}
\cortext[cor1]{}
\begin{abstract}
A memristor is a nonlinear two-terminal electrical element that incorporates memory features and nanoscale properties, enabling us to design very high-density artificial neural networks. To enhance the memory property, we should use mathematical frameworks like fractional calculus, which is capable of doing so.
 Here, we first present a fractional-order memristor synapse-coupling Hopfield neural network on two neurons and then extend the model to a neural network with a ring structure that consists of $n$ sub-network neurons, increasing the synchronization in the network. Necessary and sufficient conditions for the stability of equilibrium points are investigated, highlighting the dependency of the stability on the fractional-order value and the number of neurons.  Numerical simulations and bifurcation analysis, along with Lyapunov exponents, are given in the two-neuron case that substantiates the theoretical findings, suggesting possible routes towards chaos when the fractional order of the system increases. In the $n$-neuron case also, it is revealed that the stability depends on the structure and number of sub-networks.
\end{abstract}
\begin{keyword}
Fractional Calculus, Bifurcation, Stability, Memristor, Hopfield Neural Network
\end{keyword}
\end{frontmatter}


\section{Introduction}

The brain as the central organ of the nervous system regulates most of the activities in our body. Brain dynamics is generally studied based on mathematical equations and frameworks, more specifically artificial neural networks models \cite{li2005complex, gaurav2021eeg, nobukawa2019atypical, nobukawa2020deterministic}. Artificial neural networks can mimic the complex behaviour of non-linear systems like fractional calculus. Fractional calculus is a generalised version of integer calculus and a specific  case of convolution integral \cite{matlob2019concepts}. Lately, there has been some promising progress in solving fractional differential equations using artificial neural networks \cite{zuniga2017solving, zuniga2018new, xiong2021spectral, zuniga2021fractal}.  Z{\'u}{\~n}iga-Aguilar et al have recently suggested a new method using neural networks, optimized by the Levenberg–Marquardt algorithm, to solve fractal-fractional differential equations with a non-singular and non-local kernel \cite{zuniga2021numerical}.
Aguilar et al used a fractionalized version of neural networks to study system identification, and they have found that their model has a higher precision with a fewer number of parameters compared to integer models \cite{aguilar2020fractional}. Among artificial neural networks models, Hopfield neural network (HNN) draws considerable attention due to its simple paradigm in designing the systems that provide us with a better understanding of human memory \cite{he2020complexity, njitacke2019plethora}.
HNN has a wide range of applications in artificial intelligence, such as machine learning, associative memory, pattern recognition including self-excited hyperchaotic and chaotic oscillations \cite{li2005hyperchaos}, period-adding bifurcations \cite{rech2015period}, and coexisting asymmetric multi-stable \cite{korn2003there} patterns, optimized calculation, VLSI and parallel realization of optical devices \cite{srinivasulu2012digital, joya2002hopfield}.

Memristors can be used to forecast electromagnetic induction in biological systems, neuronal simulations, and complex systems with learning and forgetting mechanisms \cite{thomas2013memristor}. 
Using memristors instead of resistors in the Hopfield neural network models, for instance, one can build a new system where the parameters vary according to their state variables, which is said to be a memristor-based neural network. This component can also be implemented by well-known models such as Hodgkin–Huxley \cite{hu2019dynamic, mazarei2018role}, Hindmarsh–Rose \cite{usha2019hindmarsh}, Morris–Lecar \cite{morris1981voltage}, Rulkov \cite{li2021memristive}, and Wilson neuronal model \cite{xu2022electromagnetic, yang2021energy} to study the specific phenomena of interest.
To enrich the non-linear behaviour of biological neurons, for example, H. Bao et al used a Hindmarsh–Rose neuronal model, thereby representing the complex dynamical features of the electrical activities better \cite{bao2019hidden}. It is worth noting that the action potential in the models could be affected by the choice activation functions, such as hyperbolic tangent function, exponential function, non-monotonous activation, piecewise-linear, etc \cite{xu2021piecewise}. Here we are using hyperbolic tangent function.

The memory features and nanoscale properties of the memristor enable us to design very high-density systems. This component can be used as synaptic weights in artificial neural networks due to their learning ability \cite{chen2014global}.
Depending on the question of interest in artificial neural network models, different types of network topology could be used to uncover a desirable output \cite{wang2003complex, kazemi2022phase}. One commonly-used network topology is ring topology. The ring network creates an ongoing pathway for signals through each node, connected to two other nodes, and it has been implemented in many fields such as circuits, mobile ad hoc networks and optical networks \cite{chen2007raa, mewanou2006link, tang2009pinning, jiang2018synchronization}. 

Unlike ordinary calculus, definitions in fractional calculus are not unique. In fractional calculus, we have various operators under different weight functions, giving us a wide range of opportunities to study complex systems incorporating memory properties on the one hand \cite{matlob2019concepts, amirian2020memory, khalighi2021three, saeedian2017memory, khalighi2021quantifying, amirian2022monodmemory} and rising computational barriers problems on the other hand \cite{khalighi2021new}. Some basic theorems and algebra lemmas such as chain rule and stability criteria holding for integer calculus is no longer valid  when it comes to fractional calculus \cite{podlubny1998fractional, matlob2019concepts}. Therefore, careful attention is required to control the dynamics of fractional network models. This paper aims to build up a solid mathematical framework for the stability criteria of n-dimensional fractional-order network models. To do so, we first introduce our fractional-order memristor synapse-coupled Hop-field neural network (\textsc{FMHNN}) on two neurons using fractional derivative in sense of Caputo (Eq. \ref{f1}). We chose this fractional operator since it only requires initial conditions given in terms of integer-order derivatives \cite{podlubny1998fractional}, consisting with the property of our model. Applying the ring network concept on  equations, we then expand our model to a more complex system, where each membrane is coupled by $n$ sub-networks. After building the required fundamental theoretical bases for stability analysis, we examine the criteria numerically for both systems and perform bifurcation analysis, investigating the impact of the initial condition on \textsc{FMHNN} model.


\section{Materials and Methods}
 
\subsection{Models}

\textbf{\textsc{FMHNN} Model}.
Using the definition (\ref{def:cap}), we extend  the concept of the integer-order memristor synapse-coupled Hopfield neural network (\textsc{MHNN}), represented in 2019 \cite{chen2019coexisting}, to  \textsc{FMHNN} in the following form

\begin{align}\label{eq:fmhnn_2_N}
\begin{cases}
{}^{\textsc{c}}_0 D^{\alpha}_t x_1=- x_1+b_1  \tanh (x_1)+b_2  \tanh (x_2)+k\varphi (x_1-x_2),\\
{}^{\textsc{c}}_0 D^{\alpha}_t x_2=- x_2+b_3  \tanh (x_1)+b_4  \tanh (x_2)-k\varphi (x_1-x_2),\\
{}^{\textsc{c}}_0 D^{\alpha}_t \varphi =x_1-x_2.
\end{cases}
\end{align}
where $0 <\alpha \leq 1$ is the fractional order, and  $b_i$ are constant parameters. Note that the \textsc{FMHNN} model reduces to \textsc{MHNN} model when $\alpha=1$.

In this model, when there is a potential difference between the neurons, an induction current will appear, and a two-way induction current $I_M$ flows through an active flux controlled memristor synapse as shown below
\begin{align}
\begin{cases}I_M=W(\varphi)V_M=k\varphi V_M,\\
\varphi=f(V_M)=V_M.\end{cases}
\end{align}
where $\varphi ~\text{and}~ V_M$ account for the state variable of the memristor, and the potential difference of the two neurons, respectively;  $W=k\varphi$ also explains a memductance function with memristor coupling weight $k$. $x_1$ and $x_2$ are the state variables describing the potentials between the inside and outside of the two neurons, $\tanh(x_{1})$ and $\tanh(x_{2})$ are the activation function of the two neurons, and $V_{M}=x_{1}-x_{2}$ is the potential difference of the two neurons. The coefficients represent the amplitude of a connection between two neurons. The \textsc{FMHNN}  schematic connection pattern of two neurons is shown in Fig (\ref{fig}).
\begin{figure}[!tbp] 
	\begin{center}
\includegraphics[width=7cm, height=3.8cm]{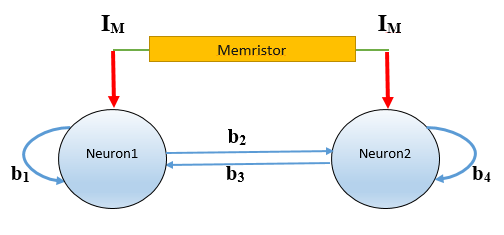}	
	\end{center}
\caption{The \textsc{FMHNN} connection pattern with two neurons.}\label{fig}
\end{figure}
\\

\textbf{\textsc{FMHNN} Model with Ring Structure}.
We apply a ring-network structure on \textsc{FMHNN} model (\ref{eq:fmhnn_2_N}), and by doing so we expand our previous model from two neurons to $n$ units of neurons. The ring network consists of $n$ sub-networks coupling strength among all membrane potentials $x_{1i}$ in the following form
\begin{align}\label{ndim}
\begin{cases}
{}^{\textsc{c}}_0 D^{\alpha}_t x_{1i} &= -x_{1i} + b_1 \tanh (x_{1i}) + b_2\tanh (x_{2i})+k\varphi_i (x_{1i}-x_{2i})+ \frac{d}{{2p}}\sum\limits_{j = i - p}^{i + p} ({{x_{1j}}}  - {x_{1i}}),
\\
{}^{\textsc{c}}_0 D^{\alpha}_t x_{2i} &= -x_{2i} + b_3 \tanh (x_{1i}) + b_4\tanh (x_{2i})-k\varphi_i (x_{1i}-x_{2i}),
\\
{}^{\textsc{c}}_0 D^{\alpha}_t\varphi _{i} &= ~~ x_{1i}-x_{2i}.
\end{cases}
\end{align}
where $ i = 1, 2, ..., n$, and $[x_{1i} , x_{2i} ,\varphi_{i}  ]$ represent the state variables of $i^{th}$ sub-network. Futher, the memristor synapses are coupled symmetrically to their $2P$ nearest neighbourhood by neural sub-networks, interacting with $P$ neighbourhood from of left and right side. 
Also, $d$ stands for the coupling strength among the $n$ sub-networks. 


\begin{defn} \cite{khalighi2021new} 
\label{def:cap}
 For a continuous function $f,$ with $ f^{'} \in L^1(R^+)$  , the Caputo fractional-order derivative of order $\alpha \in (0,1)$ is defined by 
\begin{align}
\label{f1}
{}_{t_0}^\textsc{c} D_t^\alpha f(t)=\dfrac{1}{\Gamma(1-\alpha)}\int_{t_0}^t (t-s)^{-\alpha}f'(s)ds.
 \end{align}
 \end{defn}
\subsection{Model Analysis}
Here, we study the existence and uniqueness of solutions and determine the stability criteria for both models (\ref{eq:fmhnn_2_N} \& \ref{ndim}). To do so, some fundamental definitions are required; each subsection therefore starts with those information.

\subsubsection{Existence and Uniqueness of Solutions}

\begin{defn}
The \textbf{circulant matrix} is a square matrix in which each row has the previous row components shifted cyclically one place towards the right side.
\begin{equation*}
B = circ (b_0, b_1, . . . , b_{n-1})=
\left[
{\begin{array}{*{20}{c}}
b_{0}    & b_{1} & b_{2} & b_{3} & \ddots & b_{n-2} & b_{n-1}\\
b_{n-1} & b_{0} & b_{1} & b_{2} & \ddots & b_{n-3}& b_{n-2}\\ 
b_{n-2} & b_{0} & b_{1} & b_{2} & \ddots & b_{n-4} & b_{n-3}\\
\ddots & \ddots & \ddots & \ddots &  \ddots & \ddots & \ddots \\
b_{2}     & b_{3} & b_{4} & b_{5} & \ddots & b_{0}  & b_{1} \\
b_{1}    & b_{2}  & b_{3} & b_{4} & \ddots & b_{n-1} & b_{0}
\end{array}} \right]_{n \times n}.  
\end{equation*}
Also, the \textbf{block circulant matrix} is the generalized version of the above matrix where each component consist of the circulant matrix 
\begin{equation*}
B = bcirc (B_0, B_1, . . . , B_{n-1})= 
\left[
{\begin{array}{*{20}{c}}
B_{0}    & B_{1} & B_{2} & B_{3} & \ddots & B_{n-2} & B_{n-1}\\
B_{n-1} & B_{0} & B_{1} & B_{2} & \ddots & B_{n-3}& B_{n-2}\\ 
B_{n-2} & B_{0} & B_{1} & B_{2} & \ddots & B_{n-4} & B_{n-3}\\
\ddots & \ddots & \ddots & \ddots &  \ddots &\ddots & \ddots \\
B_{2}     & B_{3} & B_{4} & B_{5} & \ddots & B_{0}  & b_{1} \\
B_{1}    & B_{2}  & B_{3} & B_{4} & \ddots & B_{n-1} & B_{0}
\end{array}} \right]_{n \times n}.
\end{equation*}
which $B_{i}$ (for $i=0, \cdots, n-1$) is a block matrices \cite{kaveh2011block, tee2007eigenvectors}. 
\end{defn}

\begin{thm}\label{thr1}
Assume that $\Omega=\{(x_1, x_2, \varphi) \in \mathop{R}^3: \max\{||\varphi||\} \leq \lambda\}$ and $\textsc{S}=\Omega \times [t_0, T]$ where $T<\infty$. For any initial conditions $(x_1(t_0), x_2(t_0), \varphi(t_0)) \in \Omega$, all the solutions $(x_1(t), x_2(t), \varphi(t)) \in \textsc{S}$ of system (\ref{eq:fmhnn_2_N}) are  unique for all $t \geq 0$.
\end{thm}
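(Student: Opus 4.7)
My plan is to recast the Caputo initial value problem as an equivalent Volterra integral equation and to apply a Picard--Banach fixed-point argument. Writing $X=(x_1,x_2,\varphi)^{\top}$ and letting $F(X)$ denote the right-hand side of system (\ref{eq:fmhnn_2_N}), Definition \ref{def:cap} together with fractional integration converts the system into
\[
X(t)=X(t_0)+\frac{1}{\Gamma(\alpha)}\int_{t_0}^{t}(t-s)^{\alpha-1}F(X(s))\,ds,
\]
so a fixed point of the Picard operator on $C([t_0,T];\mathbb{R}^3)$ defined by the right-hand side is precisely a continuous solution. Once $F$ is shown to be continuous and Lipschitz on $\textsc{S}$, a standard existence-uniqueness theorem for Caputo systems applies and yields the conclusion.

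\textbf{Key steps.} Continuity of $F$ on $\textsc{S}$ is immediate since $\tanh$, polynomials, and their products are smooth. The decisive step is the Lipschitz estimate: for $X=(x_1,x_2,\varphi)$ and $Y=(y_1,y_2,\psi)$ in $\Omega$, I would bound each component of $F(X)-F(Y)$ separately. The linear terms $-x_i$ are Lipschitz with constant $1$; the activation terms use $|\tanh(a)-\tanh(b)|\le|a-b|$; and the memristive coupling $k\varphi(x_1-x_2)$ is handled by the algebraic splitting
\[
\varphi(x_1-x_2)-\psi(y_1-y_2)=\varphi\bigl[(x_1-x_2)-(y_1-y_2)\bigr]+(y_1-y_2)(\varphi-\psi),
\]
combined with the bound $|\varphi|\le\lambda$ coming from $\Omega$. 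Summing the contributions produces a global Lipschitz constant $L=L(b_1,\ldots,b_4,k,\lambda)$ with $\|F(X)-F(Y)\|\le L\|X-Y\|$ on $\textsc{S}$. The Picard operator is then a contraction in the Bielecki-type weighted sup norm $\|X\|_\beta=\sup_{t\in[t_0,T]}e^{-\beta(t-t_0)}\|X(t)\|$ for $\beta$ sufficiently large; equivalently, one may bound successive Picard iterates by terms of the Mittag--Leffler function.

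\textbf{Main obstacle.} The nonlinear cross-term $\varphi(x_1-x_2)$ is the most delicate piece: the summand $(y_1-y_2)(\varphi-\psi)$ in the splitting above requires the $x_i$ coordinates to be bounded too, whereas $\Omega$ as written only constrains $\varphi$. I would resolve this either by interpreting $\Omega$ as a compact set in all three coordinates, which is consistent with the stated invariance $(x_1,x_2,\varphi)\in\textsc{S}$, or by a local argument: continuity of any candidate solution on the compact interval $[t_0,T]$ furnishes an a priori bound on $x_1-x_2$ so that the Lipschitz estimate closes on the invariant set. A standard continuation argument then extends the resulting local uniqueness to every $t\ge 0$, as claimed.
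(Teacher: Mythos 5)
Your proposal is correct in outline and follows essentially the same route as the paper: both arguments come down to a Lipschitz estimate for the right-hand side of system (\ref{eq:fmhnn_2_N}) on $\textsc{S}$, after which uniqueness follows from the standard theory of Caputo initial value problems. The paper works with the matrix form $\mathbb{F}(\textsc{x})=A\textsc{x}+B\tanh(\textsc{x})+\varphi K\textsc{x}$ and then simply cites the locally Lipschitz uniqueness result, whereas you make the machinery explicit (equivalent Volterra integral equation, Picard operator, contraction in a Bielecki weighted norm or via Mittag--Leffler bounds); that is precisely the proof of the theorem the paper invokes, so it adds rigor but no new idea. The genuine difference lies in your treatment of the memristive term: the splitting $\varphi(x_1-x_2)-\psi(y_1-y_2)=\varphi\bigl[(x_1-x_2)-(y_1-y_2)\bigr]+(y_1-y_2)(\varphi-\psi)$, together with your remark that the second summand requires a bound on $x_1,x_2$ which $\Omega$ as written does not supply, puts your finger on a real issue. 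The paper's own estimate bounds $\|K(\varphi\textsc{x}-\bar{\varphi}\bar{\textsc{x}})\|$ by $\|K\|\,\|\varphi\|\,\|\textsc{x}-\bar{\textsc{x}}\|$, i.e.\ it silently discards exactly this cross term, so its constant $L=\|A\|+\|B\|+\lambda\|K\|$ is justified only after one of the repairs you suggest (reading $\Omega$ as compact in all three coordinates, or localizing with an a priori bound on the state on $[t_0,T]$ and then extending by continuation). In short, the obstacle you flag is not a gap in your argument relative to the paper; it is a gap the paper's proof shares without acknowledging it, and your proposed fixes are the right way to close it.
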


\begin{proof}
Let $\mathbb{F} (\textsc{x})=(\mathbb{F}_1(\textsc{x}), \mathbb{F}_2(\textsc{x}), \mathbb{F}_3(\textsc{x})))^T$ be a mapping function with $||.||$ norm so that 
\begin{align}
    \mathbb{F}_1(\textsc{x})&= -x_1 + b_1 \tanh (x_1) + b_2 \tanh (x_2) + k\varphi (x_1-x_2),\nonumber\\
    \mathbb{F}_2(\textsc{x})&= -x_2 + b_3 \tanh (x_1) + b_4 \tanh (x_2) - k\varphi (x_1-x_2),\nonumber\\
    \mathbb{F}_3(\textsc{x})&= ~~x_1 - x_2 \nonumber
\end{align}
where $\textsc{x}=(x_1, x_2, \varphi)$. Therefore, we can re-write our mapping function in a multi-variable form, $\mathbb{F}(\textsc{x})= A \textsc{x} + B \tanh(\textsc{x}) + \varphi K \textsc{x}$, such that 
\begin{align*}
A=
\left[ {\begin{array}{*{20}{c}}
    -1 & 0 & 0 \\ 
    0 & -1 & 0 \\ 
    1 & -1 & 0 
\end{array}} \right], ~~
B = 
\left[ {\begin{array}{*{20}{c}}
    b_1 & b_2 & 0 \\ 
    b_3 & b_4 & 0\\ 
    0   & 0   & 0
\end{array}} \right], ~~
K=
\left[ {\begin{array}{*{20}{c}}
    k & -k  & 0\\ 
   -k &  k  & 0\\ 
    0 &  0  & 0
\end{array}} \right], ~~
\textsc{x} =
\left[ {\begin{array}{*{20}{c}}
    x_1   \\ 
    x_2  \\
    \varphi
\end{array}} \right]
\end{align*}
We now prove that the system (\ref{eq:fmhnn_2_N}) satisfies the locally Lipschitz condition \cite{amirian2020memory}, i.e.
$$ \forall \textsc{x}, \bar{\textsc{x}} \in \textsc{S}, ~ \exists L \geq 0 ~~~\text{s.t}~~ ||\mathbb{F}(\textsc{x})-\mathbb{F}(\bar{\textsc{x}})|| \leq L||\textsc{x}-\bar{\textsc{x}}||$$
where $ \textsc{x}=(x_1,x_2, \varphi)^T ~\text{and}~ \bar{\textsc{x}}=(\bar{x_1},\bar{x_2}, \bar{\varphi})^T$.
\begin{align}
    ||\mathbb{F}(\textsc{x})-\mathbb{F}(\bar{\textsc{x}})||
    &=||A (\textsc{x}-\bar{\textsc{x}}) + B (\tanh(\textsc{x})-\tanh(\bar{\textsc{x}})) + K(\varphi \textsc{x}- \bar{\varphi} \bar{\textsc{x}})|| \nonumber\\
    &\leq ||A||~ ||\textsc{x}-\bar{\textsc{x}}|| + ||B||~ ||\tanh(\textsc{x})-\tanh(\bar{\textsc{x}})|| + ||K \varphi||~ ||\textsc{x}-\bar{\textsc{x}}|| \nonumber\\
    &\leq ( ||A||+ ||B|| + ||K||~|| \varphi||) ~||\textsc{x}-\bar{\textsc{x}}||  \nonumber\\
    &\leq ( ||A||+ ||B|| + \lambda||K||)~ ||\textsc{x}-\bar{\textsc{x}}||  \nonumber\\
    &=  L||\textsc{x}-\bar{\textsc{x}}|| \nonumber
    \end{align}
where $L=\left(||A||+ ||B|| + \lambda||K||\right)>0$. This means that with an initial condition $\textsc{x}(t_0)=\left(x_1(t_0), x_2(t_0), \varphi(t_0) \right)^T$, \textsc{FMHNN} model (\ref{eq:fmhnn_2_N}) has an unique solution $\textsc{x}(t) \in \textsc{S}$.
\end{proof}

\begin{thm}\label{th:uniq_ND}
Assume that $\Omega=\{(\textsc{x}_1, \textsc{x}_2, \phi) \in \mathbb{R}^{n \times 3}, ~\text{s.t}~~ \textsc{x}_1 = [x_{1i}]_{{}_{n \times 1}},~ \textsc{x}_2 = [x_{2i}]_{{}_{n \times 1}},~ \phi = [\varphi_i]_{{}_{n \times 1}}, ~ \Lambda = [\lambda_i]_{{}_{n \times 1}}, ~ \max\{||\phi||\} \leq \Lambda\}_{|_{i=1,2, ..., n}}$ and also $S=\Omega \times [\textsc{t}_0, \textsc{t}]$ where $\textsc{t}=[T_i]_{{}_{n \times 1}}$, $\textsc{t}_0=[t_{0i}]_{{}_{n \times 1}}$ for $i=1,2, ..., n$ and $\textsc{t}<\infty$. For any initial conditions $\left(\textsc{x}_1(\textsc{t}_0), \textsc{x}_2(\textsc{t}_0), \phi(\textsc{t}_0)\right) \in \Omega$, all the solutions $(\textsc{x}_1(\textsc{t}), \textsc{x}_2(\textsc{t}), \phi(\textsc{t})) \in S$ of system (\ref{ndim}) are  unique for all $\textsc{t} \geq 0$.
\end{thm}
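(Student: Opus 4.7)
The plan is to mirror the argument of Theorem \ref{thr1} almost verbatim, with the only new ingredient being a compact matrix encoding of the ring coupling. First I would stack the state into a single vector $\mathbf{X}=(\textsc{x}_1^{T},\textsc{x}_2^{T},\phi^{T})^{T}\in\mathbb{R}^{3n}$ and rewrite the right-hand side of \eqref{ndim} as $\mathbb{F}(\mathbf{X})=\mathcal{A}\mathbf{X}+\mathcal{B}\tanh(\mathbf{X})+\mathcal{K}(\phi)\mathbf{X}$, where $\mathcal{A}$ collects the self-inhibition terms $-x_{1i},-x_{2i}$ together with the ring-coupling difference, $\mathcal{B}$ packages the weights $b_1,\ldots,b_4$ acting locally within each sub-network, and $\mathcal{K}(\phi)$ encodes the bilinear memristor terms $k\varphi_i(x_{1i}-x_{2i})$. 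The payoff of this repackaging is that the proof then becomes a norm estimate on $3n\times 3n$ block matrices rather than on the $n$ separate equations.

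Next I would identify the block structure explicitly, making use of the \textbf{circulant} and \textbf{block circulant} language already introduced. The ring term $\tfrac{d}{2p}\sum_{j=i-p}^{i+p}(x_{1j}-x_{1i})$ is, by construction, the image of $\textsc{x}_{1}$ under a circulant matrix $C=\operatorname{circ}(c_{0},c_{1},\ldots,c_{n-1})$ with $c_{0}=-d$ and $c_{j}=d/(2p)$ precisely at the $p$ positions on either side of the diagonal. Hence $\mathcal{A}$ is the block matrix whose $(1,1)$-block is $-I_{n}+C$, whose $(2,2)$-block is $-I_{n}$, and whose only other nonzero entries are the two $\pm I_{n}$ appearing in the $\phi$-equation; $\mathcal{B}$ and $\mathcal{K}$ are block-diagonal with the 2-neuron kernels of Theorem \ref{thr1} repeated $n$ times. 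In particular the entire linearization in $\mathbf{X}$ is block circulant, so operator norms can be controlled uniformly in $n$.

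Once this algebraic setup is in place, the locally Lipschitz estimate is obtained exactly as in the proof of Theorem \ref{thr1}. For $\mathbf{X},\bar{\mathbf{X}}\in S$ I would write
\begin{align*}
\|\mathbb{F}(\mathbf{X})-\mathbb{F}(\bar{\mathbf{X}})\|
&\leq \|\mathcal{A}\|\,\|\mathbf{X}-\bar{\mathbf{X}}\|
+\|\mathcal{B}\|\,\|\tanh(\mathbf{X})-\tanh(\bar{\mathbf{X}})\|
+\|\mathcal{K}(\phi)\mathbf{X}-\mathcal{K}(\bar\phi)\bar{\mathbf{X}}\|,
\end{align*}
use the 1-Lipschitz property of $\tanh$ on the second term, and handle the last term by an add-and-subtract together with the hypothesis $\max\{\|\phi\|\}\leq\Lambda$. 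Collecting constants yields a bound $\tilde{L}\|\mathbf{X}-\bar{\mathbf{X}}\|$ with $\tilde{L}=\|\mathcal{A}\|+\|\mathcal{B}\|+\|\Lambda\|\,\|\mathcal{K}\|$, and the uniqueness of $(\textsc{x}_{1}(\textsc{t}),\textsc{x}_{2}(\textsc{t}),\phi(\textsc{t}))\in S$ for all $\textsc{t}\geq 0$ then follows from the standard Caputo existence-uniqueness result.

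The main obstacle I expect is bookkeeping rather than analysis: one must simultaneously track the ring circulant structure acting on $\textsc{x}_{1}$, the block-diagonal duplication of the 2-neuron kernels, and the fact that $\mathcal{K}(\phi)\mathbf{X}$ is genuinely \emph{bilinear} in $(\phi,\mathbf{X})$ rather than linear in $\mathbf{X}$ alone. The latter forces an extra add-and-subtract step that was mostly implicit in the 2-neuron argument, and it also dictates the precise choice of norm on $\phi$, since the hypothesis $\max\{\|\phi\|\}\leq\Lambda$ must be strong enough to uniformly dominate the coordinate-wise products $\varphi_{i}(x_{1i}-x_{2i})$ across the entire ring.
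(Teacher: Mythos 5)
Your proposal follows essentially the same route as the paper's own proof: rewrite the right-hand side of \eqref{ndim} as a linear block-circulant part plus the $\tanh$ terms plus the bilinear memristor term, establish the locally Lipschitz bound on $S$ via the triangle inequality, the 1-Lipschitz property of $\tanh$, and the bound $\max\{\|\phi\|\}\leq\Lambda$, and then invoke the standard Caputo existence--uniqueness result, arriving at the same Lipschitz constant $L=\|A\|+\|B_1\|+\|B_2\|+\|\Lambda\|\,\|K\|$ (your $\tilde L$). The only differences are notational (you stack the state by variable type rather than by sub-network) and that you make explicit the add-and-subtract needed for the bilinear term, which the paper leaves implicit.
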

\begin{proof}
Let $\mathbb{F} (\textsc{X})=(\mathbb{F}_1(\textsc{X}), \mathbb{F}_2(\textsc{X}), \mathbb{F}_3(\textsc{X}))^T$ be a multi-variable  mapping function with $||.||$ norm so that 
\begin{align}
    \mathbb{F}_1(X)&= 
    -\textsc{x}_1 + {b}_1 \tanh (\textsc{x}_1) + {b}_2\tanh (\textsc{x}_2)+k\varphi (\textsc{x}_1-\textsc{x}_2) +  \frac{d}{{2p}}\sum\limits_{j = i - p}^{i + p} ({{\textsc{x}_{1j}}} - {\textsc{x}_{1i}}),\nonumber \\
    \mathbb{F}_2(X)&= -\textsc{x}_2 + {b}_3 \tanh (\textsc{x}_1) + {b}_4\tanh (\textsc{x}_2)-k\varphi (\textsc{x}_1-\textsc{x}_2), \\
    \mathbb{F}_3(X)&= ~~\textsc{x}_1 - \textsc{x}_2. \nonumber
\end{align}
Therefore, we can re-write our mapping function in the following form 
$$\mathbb{F}(\textsc{X})= A X + B_1 \tanh(\textsc{X}) + B_2 \tanh(\textsc{X})+\varphi K \textsc{X},$$
where $A=bcirc(A_0,A_1,...,A_1) \in {\mathbb{R}^{n \times n}}, B_1=[\beta_1]_{1 \times n}, B_2= [\beta_2]_{1 \times n}, \beta_1 = [b_1, b_3, 0], \beta_2=[b_2, b_4, 0], ~\text{and} ~K=bcirc(\kappa,0,..,0)_{1\times n}$  so that $A_0,A_1 \in {\mathbb{R}^{3 \times 3}}$  and
\begin{equation*}
A_0=\left[ {\begin{array}{*{20}{c}}
  { - 1 - \frac{d}{p}}&0&0 \\ 
  0&{ - 1}&0 \\ 
  1&{ - 1}&0 
\end{array}} \right],~~~
A_1=\left[ {\begin{array}{*{20}{c}}
  {\frac{d}{{2p}}}&0&0 \\ 
  0&0&0 \\ 
  0&0&0 
\end{array}} \right],~~~
\kappa =\left[ {\begin{array}{*{20}{c}}
   - k &k&0 \\ 
  k&-k&0 \\ 
  0&0&0 
\end{array}} \right]
\end{equation*}
We now prove that the system (\ref{ndim}) satisfies the locally Lipschitz condition \cite{amirian2020memory}, i.e.
$$ \forall \textsc{X}, \bar{\textsc{X}} \in \textsc{S}, ~ \exists L \geq 0 ~~~\text{s.t}~~ ||\mathbb{F}(\textsc{X})-\mathbb{F}(\bar{\textsc{X}})|| \leq L||\textsc{X}-\bar{\textsc{X}}||,$$
\begin{align}
    ||\mathbb{F}(\textsc{X})-\mathbb{F}(\bar{\textsc{X}})||
    &=||A (\textsc{X}-\bar{\textsc{X}}) + (B_1 + B_2) (\tanh(\textsc{X})-\tanh(\bar{\textsc{X}})) + K(\varphi \textsc{X}- \bar{\varphi} \bar{\textsc{X}})|| \nonumber\\
    &\leq ||A||~ ||\textsc{X}-\bar{\textsc{X}}|| + ||B_1 + B_2||~ ||\tanh(\textsc{X})-\tanh(\bar{\textsc{X}})||+ ||\Lambda||~ ||K||~ ||\textsc{X}-\bar{\textsc{X}}|| \nonumber\\
    &\leq  \left(||A|| +||B_1|| +||B_2|| + ||\Lambda||~ ||K|| \right)   ||\textsc{X}-\bar{\textsc{X}}||  \nonumber\\
    &=  L||\textsc{X}-\bar{\textsc{X}}||, \nonumber
\end{align}
where $L=(||A|| + ||B_1|| + ||B_2|| + ||\Lambda||~||K||)$. This means that with an initial condition $\textsc{X}(t_0)=\left(\textsc{x}_1(\textsc{t}_0), \textsc{x}_2(\textsc{t}_0), \phi(\textsc{t}_0) \right)^T$, \textsc{FMHNN} model with ring structure (\ref{ndim}) has an unique solution $\textsc{X}(\textsc{t}) \in \textsc{S}$.
\end{proof}
\subsubsection{Stability of Solutions}
Here, we build up the theoretical framework for the stability analysis of our models.

\begin{defn}\label{def:laplace}
\cite{matlob2019concepts} For $0<\alpha<1$, the Laplace transform of Caputo fractional derivative,  ${}^{\textsc{c}}_{t_0} D^{\alpha}_t f(t)$,  derives from the following equation
$$ \int\limits_0^\infty  {{e^{ - st}}}~
 {}^{\textsc{c}}_{t_0} D^{\alpha}_t f(t) dt = {s^\alpha }F(s) - s^{\alpha - 1}f(t_0)$$ 
\end{defn}
\begin{defn}
\cite{matlob2019concepts}
For matrix $A \in {R^{n \times n}}$, the matrix Mittag-Leffler function is defined by\\
$$E_{\alpha, \beta}[A] = \sum\limits_{k = 0}^\infty  {\frac{{{A^k}}}{{\Gamma (\alpha k + \beta )}}}$$
\end{defn}

\begin{lemma}\label{lemma1}
\cite {matignon1996stability}
The linear autonomous system ${}^{\textsc{c}}_{t_0} D^{\alpha}_t \textsc{x} = A \textsc{x}$ is asymptotically stable if and only if 
\begin{equation}\label{9}
    \vert\mathrm{arg}(\lambda)\vert>\dfrac{\alpha \pi}{2}  ~when~ \lambda\in\sigma(A)
    \end{equation}
where $\sigma(A)$ denotes the spectrum of the matrix $A \in R^{n \times n}$. 
 \end{lemma}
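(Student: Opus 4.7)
The plan is to follow the classical Matignon argument: solve the linear system explicitly via a matrix Mittag-Leffler function, reduce the matrix problem to a scalar one through the Jordan canonical form of $A$, and then read the stability criterion off the large-argument asymptotic expansion of $E_{\alpha,\beta}$.

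First I would apply the Laplace transform from Definition \ref{def:laplace} componentwise to ${}^{\textsc{c}}_{t_0} D^{\alpha}_t \textsc{x} = A\textsc{x}$, giving $s^\alpha X(s) - s^{\alpha-1}\textsc{x}(t_0) = AX(s)$ and hence $X(s) = (s^\alpha I - A)^{-1} s^{\alpha-1}\textsc{x}(t_0)$. Inverting term by term in the Neumann expansion of $(s^\alpha I - A)^{-1}$ produces the closed-form solution $\textsc{x}(t) = E_{\alpha,1}\!\bigl(A(t-t_0)^\alpha\bigr)\textsc{x}(t_0)$, so asymptotic stability of the linear system is equivalent to $E_{\alpha,1}(At^\alpha)\to 0$ as $t\to\infty$.

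Next I would reduce to the scalar case. Writing $A = PJP^{-1}$ in Jordan canonical form and using the power-series definition of $E_{\alpha,1}$, one obtains $E_{\alpha,1}(At^\alpha) = P\, E_{\alpha,1}(Jt^\alpha)\, P^{-1}$, which is block-diagonal with a block $E_{\alpha,1}(J_k t^\alpha)$ attached to each eigenvalue $\lambda\in\sigma(A)$. For a Jordan block of size $m$ with eigenvalue $\lambda$ and nilpotent part $N$, the standard functional-calculus identity gives $E_{\alpha,1}(J_k t^\alpha) = \sum_{j=0}^{m-1} \tfrac{1}{j!} N^j\, \partial_\lambda^j E_{\alpha,1}(\lambda t^\alpha)$, so the whole question reduces to the scalar asymptotics of $E_{\alpha,1}(\lambda t^\alpha)$ and finitely many of its $\lambda$-derivatives.

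Then I would invoke the classical Mittag-Leffler asymptotic expansion: for fixed $0<\alpha<1$ and large $|z|$, if $|\arg z|>\alpha\pi/2$ one has $E_{\alpha,\beta}(z) = -\sum_{k=1}^{N} z^{-k}/\Gamma(\beta-\alpha k) + O(|z|^{-N-1})$, while for $|\arg z|\le\alpha\pi/2$ an extra term $\alpha^{-1} z^{(1-\beta)/\alpha}\exp(z^{1/\alpha})$ appears. Setting $z=\lambda t^\alpha$ gives the three-way dichotomy: if $|\arg(\lambda)|>\alpha\pi/2$ every block decays at rate $t^{-\alpha}$ (the $\lambda$-derivatives only sharpen the decay), so the system is asymptotically stable; if $|\arg(\lambda)|<\alpha\pi/2$ then $\operatorname{Re}(\lambda^{1/\alpha})>0$ and that eigendirection grows exponentially; and on the critical ray $|\arg(\lambda)|=\alpha\pi/2$ the leading term is purely oscillatory, so decay fails.

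The main obstacle I expect is the careful Jordan-block bookkeeping coupled with the boundary case $|\arg(\lambda)|=\alpha\pi/2$: one must verify that the polynomial prefactors $t^{\alpha j}$ generated by nontrivial Jordan blocks are absorbed by the algebraic decay of the derivatives of $E_{\alpha,1}$, and that on the critical ray the oscillatory leading term really prevents convergence to zero even in the semisimple case. Since Lemma \ref{lemma1} is Matignon's original theorem, an acceptable shortcut—and indeed the route taken in the excerpt—is simply to cite \cite{matignon1996stability} and treat the lemma as a black box for the subsequent stability analysis.
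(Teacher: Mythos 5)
Your outline is correct, and it is essentially Matignon's own argument: the paper itself offers no proof of Lemma \ref{lemma1}, it simply quotes the result from \cite{matignon1996stability}, so your Laplace-transform/Mittag-Leffler/Jordan-block sketch is a reconstruction of the cited proof rather than an alternative to anything in the text. The ingredients you list are the right ones: $\textsc{x}(t)=E_{\alpha,1}\bigl(A(t-t_0)^\alpha\bigr)\textsc{x}(t_0)$, reduction to scalar blocks via $E_{\alpha,1}(J_k t^\alpha)=\sum_{j=0}^{m-1}\tfrac{1}{j!}N^j\partial_\lambda^j E_{\alpha,1}(\lambda t^\alpha)$ (note your $\lambda$-derivative form already carries the $t^{\alpha j}$ prefactors, so the bookkeeping you worry about is automatic), and the sectorial asymptotics of $E_{\alpha,\beta}$ giving algebraic decay for $\vert\arg\lambda\vert>\alpha\pi/2$, exponential growth for $\vert\arg\lambda\vert<\alpha\pi/2$, and non-decaying oscillation on the critical ray. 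Two small points to tighten if you write it out in full: the $\lambda$-derivatives do not ``sharpen'' the decay in $t$ --- every block still decays only like $t^{-\alpha}$, which is all you need --- and you should note explicitly that $\lambda=0$ is excluded by the strict inequality (with the convention $\arg(0)=0$), since a zero eigenvalue yields a block $E_{\alpha,1}(Nt^\alpha)$ that is polynomial in $t$ and certainly not asymptotically stable. With those remarks the sketch is sound; treating the lemma as a black-box citation, as the paper does, is of course also acceptable.
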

 
 
 \begin{lemma}\label{lemma2}
 \cite{kaslik2012nonlinear}
 The roots of the second order polynomial $p(\lambda)= \lambda^2 + a \lambda + c$ satisfies inequality (\ref{9}) if and only if 
 $c > 0$ and $b/\sqrt{c} < 2\cos(\alpha \pi / 2)$.
 \end{lemma}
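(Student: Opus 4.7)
The plan is to write the two roots of $p(\lambda)$ explicitly via the quadratic formula and then translate the angular inequality (\ref{9}) into an algebraic inequality on the coefficients, splitting on the sign of the discriminant. I write the polynomial as $p(\lambda)=\lambda^2+b\lambda+c$ so that the linear coefficient matches the one in the claimed inequality (the statement in the excerpt alternates between $a$ and $b$, a minor typographical slip).

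First I would argue that $c>0$ is necessary. Since $\lambda_1\lambda_2=c$, the case $c=0$ gives $\lambda=0$ as a root, and the case $c<0$ forces two real roots of opposite sign; in either situation one root has argument $0$, violating (\ref{9}) because $\alpha\pi/2>0$. From now on assume $c>0$ and set $\Delta=b^2-4c$.

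If $\Delta\ge 0$ the two roots are real, so (\ref{9}) holds iff both are strictly negative (giving $|\arg|=\pi$), which by Vieta is equivalent to $b>0$; in that regime $-b/\sqrt{c}<0<2\cos(\alpha\pi/2)$, so the coefficient inequality holds automatically. If $\Delta<0$ the roots form a complex conjugate pair
$\lambda_{\pm}=-b/2\pm i\sqrt{4c-b^{2}}/2$,
and a direct computation gives $|\lambda_{\pm}|^{2}=b^{2}/4+(4c-b^{2})/4=c$, hence $\cos(|\arg\lambda_{\pm}|)=\operatorname{Re}(\lambda_{\pm})/|\lambda_{\pm}|=-b/(2\sqrt{c})$. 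Since $\cos$ is strictly decreasing on $[0,\pi]$ and both $|\arg\lambda_{\pm}|$ and $\alpha\pi/2$ lie in that interval, (\ref{9}) is equivalent to $-b/(2\sqrt{c})<\cos(\alpha\pi/2)$, i.e.\ $-b/\sqrt{c}<2\cos(\alpha\pi/2)$, which is precisely the claimed inequality once the sign convention on $b$ is fixed. Merging the two cases establishes the "if and only if".

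The only subtle step is the translation from (\ref{9}) to a cosine inequality on the roots in the complex case: one must verify that both $|\arg\lambda_{\pm}|$ and $\alpha\pi/2$ lie in $[0,\pi]$ so that the monotonicity of $\cos$ can be applied, and keep careful track of the sign of $\operatorname{Re}(\lambda_{\pm})$. I do not expect any real obstacle beyond this bookkeeping, since everything else reduces to the quadratic formula together with Vieta's relations.
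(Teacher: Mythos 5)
The paper never proves Lemma~\ref{lemma2}: it is imported verbatim (with a typographical slip between $a$ and $b$) from the cited reference of Kaslik and Sivasundaram, so your elementary argument via the quadratic formula is a self-contained substitute rather than a parallel of anything in the text, and it is essentially sound. Your resolution of the sign convention is also the right one: the lemma is applied in Theorem~\ref{th:2D_stability} to the polynomial written as $\lambda^{2}-\tau\lambda+\eta$ with the test $\tau/\sqrt{\eta}<2\cos(\alpha\pi/2)$, i.e.\ the quantity compared with $2\cos(\alpha\pi/2)$ is minus the linear coefficient, exactly the $-b/\sqrt{c}$ you obtain. Your treatment of the necessity of $c>0$ and of the complex-conjugate case (modulus $\sqrt{c}$, $\cos|\arg\lambda_{\pm}|=-b/(2\sqrt{c})$, monotonicity of cosine on $[0,\pi]$) is correct and is the heart of the matter.

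One step is left incomplete. In the real-root case $\Delta=b^{2}-4c\ge 0$ you establish only one direction: condition (\ref{9}) forces both roots negative, hence $b>0$, hence $-b/\sqrt{c}<0\le 2\cos(\alpha\pi/2)$. For the claimed ``if and only if'' you must also show that, in this case, the coefficient inequality cannot hold when (\ref{9}) fails. This follows from an observation you did not record: $\Delta\ge 0$ together with $c>0$ gives $|b|\ge 2\sqrt{c}$, so if $b<0$ (positive real roots, $\arg=0$) then $-b/\sqrt{c}\ge 2>2\cos(\alpha\pi/2)$ for every $\alpha\in(0,1]$, and the inequality fails as required. Adding that one line closes the equivalence; nothing else in the proposal needs repair.
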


\begin{lemma}\label{3}
\cite{podlubny1998fractional} If $A \in {R^{n \times n}}$, $\beta$ is an arbitrary real number, and $\frac{{\alpha \pi }}{2} < \mu  < \min \{ \pi ,\alpha \pi \}$, then a real positive constant, $ {r} > 0$, exists so that
\begin{equation}
\left\| E_{\alpha, \beta}[A] \right\| \leqslant \frac{{{r}}}{{1 + \left\| A \right\|}}, \hspace{2cm} \mu \leq |\arg (\lambda_i[A])| \leq \pi, ~\text{for}~ i=1, 2, ..., n
\end{equation}
where $\lambda_i(A)$ denotes the eigenvalues of matrix A.
\end{lemma}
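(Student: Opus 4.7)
The strategy is two-stage. First establish the scalar estimate $|E_{\alpha,\beta}(z)|\leq M/(1+|z|)$ for every $z$ in the closed sector $\mu\leq|\arg z|\leq\pi$, and then lift it to matrices via the Jordan canonical form together with the standard matrix-function calculus. The sector condition $\alpha\pi/2<\mu<\min\{\pi,\alpha\pi\}$ is precisely what guarantees that the eigenvalues stay away from the region where $E_{\alpha,\beta}$ has exponential growth, so the pointwise decay is available.

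For the scalar estimate I would start from the Hankel-type contour representation
\begin{equation*}
E_{\alpha,\beta}(z)\;=\;\frac{1}{2\pi i}\oint_{\mathcal{C}}\frac{t^{\alpha-\beta}e^{t}}{t^{\alpha}-z}\,dt,
\end{equation*}
with $\mathcal{C}$ the usual contour wrapping the branch cut on the negative real axis. Deforming $\mathcal{C}$ and isolating the residue contributions under the hypothesis $\mu\leq|\arg z|\leq\pi$ yields the asymptotic expansion
\begin{equation*}
E_{\alpha,\beta}(z)\;=\;-\sum_{k=1}^{N}\frac{z^{-k}}{\Gamma(\beta-\alpha k)}\;+\;O(|z|^{-N-1}),\qquad|z|\to\infty,
\end{equation*}
uniformly in the sector. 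Taking $N=1$ gives $|E_{\alpha,\beta}(z)|\leq C_{1}/|z|$ for $|z|$ large; combined with continuity of $E_{\alpha,\beta}$ on the closed sector intersected with any fixed disc (so boundedness on compacta), this gives the claimed $|E_{\alpha,\beta}(z)|\leq M/(1+|z|)$, with $M$ depending only on $\alpha,\beta,\mu$.

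To pass to matrices, write $A=PJP^{-1}$ with $J=\operatorname{diag}(J_{1},\ldots,J_{s})$ and $J_{\ell}=\lambda_{\ell}I_{m_{\ell}}+N_{\ell}$ where $N_{\ell}$ is the nilpotent shift. By the standard holomorphic functional calculus on Jordan blocks,
\begin{equation*}
E_{\alpha,\beta}(A)\;=\;P\,\operatorname{diag}\bigl(E_{\alpha,\beta}(J_{\ell})\bigr)\,P^{-1},\qquad E_{\alpha,\beta}(J_{\ell})\;=\;\sum_{j=0}^{m_{\ell}-1}\frac{E_{\alpha,\beta}^{(j)}(\lambda_{\ell})}{j!}\,N_{\ell}^{j}.
\end{equation*}
Each derivative $E_{\alpha,\beta}^{(j)}(\lambda)=\sum_{k\geq j}\tfrac{k!/(k-j)!}{\Gamma(\alpha k+\beta)}\lambda^{k-j}$ is of the same Mittag-Leffler type and satisfies an analogous bound $|E_{\alpha,\beta}^{(j)}(\lambda)|\leq M_{j}/(1+|\lambda|)$ in the sector, by re-running the contour argument of the previous paragraph (or differentiating the asymptotic expansion termwise, which is legal inside the sector). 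Taking norms, summing over $\ell$ and $j$, and absorbing $\|P\|$, $\|P^{-1}\|$, and the nilpotent pieces $\|N_{\ell}\|^{j}$ into a single constant finally yields $\|E_{\alpha,\beta}(A)\|\leq r/(1+\|A\|)$ after using $|\lambda_{\ell}|\leq\rho(A)\leq\|A\|$ to convert the eigenvalue-level estimate into one in $\|A\|$.

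The main technical obstacle is the first step: proving the scalar asymptotic rigorously in the full sector $\mu\leq|\arg z|\leq\pi$ requires a careful deformation of the Hankel contour and tail estimates, and the hypothesis $\mu>\alpha\pi/2$ is used exactly to keep the pole $t^{\alpha}=z$ off the deformed path. The secondary bookkeeping difficulty is ensuring that the constant $r$ which emerges from the Jordan-form lift depends only on $\alpha$, $\beta$, $\mu$ and on the similarity data of $A$, but not in an exploding way on $\|A\|$ itself; once the scalar estimate and the matrix-function formula above are in hand, this is just an accounting step.
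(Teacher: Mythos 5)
The paper itself offers no proof of this lemma: it is quoted verbatim with a citation to Podlubny, and what that reference actually contains is the \emph{scalar} sector estimate $|E_{\alpha,\beta}(z)|\le C/(1+|z|)$ for $\mu\le|\arg z|\le\pi$ (Podlubny's Theorem~1.6). Your first stage reproduces exactly that classical argument (Hankel-type representation, contour deformation using $\mu>\alpha\pi/2$ to avoid the pole, one-term asymptotic expansion plus boundedness on compacta), and that part is sound.

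The matrix lift, however, fails at its last step. From the Jordan-block computation you get $\|E_{\alpha,\beta}(J_\ell)\|\le C_\ell/(1+|\lambda_\ell|)$, and you then invoke $|\lambda_\ell|\le\rho(A)\le\|A\|$ ``to convert the eigenvalue-level estimate into one in $\|A\|$''. That inequality runs the wrong way: $|\lambda_\ell|\le\|A\|$ gives $1/(1+|\lambda_\ell|)\ge 1/(1+\|A\|)$, so decay in the eigenvalues does not imply decay in $\|A\|$. The defect is not mere bookkeeping: take the $2\times2$ upper-triangular matrix $A$ with both diagonal entries $-1$ and off-diagonal entry $R$; its spectrum sits at $-1$, inside every admissible sector, yet $E_{\alpha,\beta}(A)=E_{\alpha,\beta}(-1)I+E_{\alpha,\beta}'(-1)N$ has norm of order $R\approx\|A\|$ (whenever $E_{\alpha,\beta}'(-1)\neq0$), so no constant depending only on $\alpha,\beta,\mu$ and the Jordan structure can produce $r/(1+\|A\|)$. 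For one fixed matrix the stated inequality is trivially true with $r=\|E_{\alpha,\beta}(A)\|\,(1+\|A\|)$, but that is useless for how the lemma is actually deployed in the paper's stability theorem, where it must hold for the whole family $At^\alpha$, $t\ge0$, with a single constant. The workable repair is to prove the bound for that family: on each Jordan block of $At^\alpha$ the $j$-th derivative contributions decay like $|\lambda t^\alpha|^{-1-j}$ (differentiate the asymptotic expansion, as you suggest), which exactly compensates the $t^{\alpha j}$ growth of the nilpotent part and yields $\|E_{\alpha,\beta}(At^\alpha)\|\le r_A/(1+\|A\|t^\alpha)$ with $r_A$ depending on the Jordan data of $A$ — or else restrict to normal (or diagonalizable, with conditioning absorbed into $r$) matrices, where your argument closes by replacing $1+\|A\|$ with $1+\rho(A)$ and noting these are comparable in that case. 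As written, your proof neither tracks this scaling nor states such a restriction, so the final inequality does not follow from the chain of estimates you give.
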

\begin{lemma}\label{4}
\cite{wen2008stability} Assume that $u(t)$ and $f(t)$ are real-valued piecewise-continuous functions defined on the real interval $[a, b]$, $K(t)$ is also real-valued and $K(t)\in L(a, b)$ and $u(t)$, $K(t)$ are non-negative on this interval. 
\[ \forall t \in [a, b], ~~~   u(t) \leqslant f(t) + \int\limits_a^t {K(\tau )u(\tau )d\tau } \]
and 
\[u(t) \leqslant f(t) + \int\limits_a^t {f(\tau )K(\tau )u(\tau )\exp \left\{ {\int\limits_\tau ^t {K(s)ds} } \right\}d\tau }.\]
\end{lemma}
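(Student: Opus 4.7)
The plan is to deduce the second estimate from the first by the classical Gronwall--Bellman integrating-factor argument, interpreting all derivatives almost everywhere so as to respect the piecewise-continuous hypotheses. First I would introduce the auxiliary quantity
\[
v(t):=\int_a^t K(\tau)u(\tau)\,d\tau,
\]
which is absolutely continuous on $[a,b]$ because $K\in L(a,b)$ and $u$ is piecewise continuous and bounded on each subinterval; its derivative is $v'(t)=K(t)u(t)$ a.e., and $v(a)=0$. The hypothesis then reads $u(t)\leq f(t)+v(t)$.

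Next I would multiply this pointwise inequality through by the non-negative weight $K(t)$ to obtain the linear differential inequality $v'(t)\leq K(t)f(t)+K(t)v(t)$ a.e. To resolve it I would introduce the positive integrating factor $\mu(t):=\exp\bigl\{-\int_a^t K(s)\,ds\bigr\}$ and compute, via the product rule for absolutely continuous functions,
\[
\frac{d}{dt}\bigl(\mu(t)v(t)\bigr)=\mu(t)\bigl(v'(t)-K(t)v(t)\bigr)\leq \mu(t)K(t)f(t)\quad\text{a.e.}
\]
Integrating from $a$ to $t$ with $v(a)=0$ gives $\mu(t)v(t)\leq \int_a^t \mu(\tau)K(\tau)f(\tau)\,d\tau$. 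Dividing by $\mu(t)>0$ and recognising $\mu(\tau)/\mu(t)=\exp\bigl\{\int_\tau^t K(s)\,ds\bigr\}$ yields
\[
v(t)\leq \int_a^t K(\tau)f(\tau)\exp\Bigl\{\int_\tau^t K(s)\,ds\Bigr\}d\tau,
\]
which, combined with $u(t)\leq f(t)+v(t)$, is the desired bound (the factor $u(\tau)$ inside the integrand in the statement as printed I read as a typographical duplication of the standard Gronwall--Bellman form, since otherwise the estimate would not close).

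The main obstacle is purely technical rather than conceptual: because $u$ is only piecewise continuous and $K$ only $L^1$, the function $v$ is not classically differentiable, so the product-rule and ``divide by $\mu$'' manipulations must be justified through the fundamental theorem of calculus for absolutely continuous functions and the comparison of differential inequalities must be taken a.e.\ before integration. A second small point to monitor is that both $\mu(t)$ and $K(t)$ remain non-negative throughout, so that sign-reversal never enters the chain of inequalities; this is guaranteed by the assumed non-negativity of $K$. Once these regularity details are dispatched, the remaining work is an algebraic rearrangement that immediately produces the stated Gronwall-type estimate.
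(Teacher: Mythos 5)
Your argument is correct: the paper itself offers no proof of this lemma (it is quoted from Wen et al.\ \cite{wen2008stability}), and what you give is precisely the standard Gronwall--Bellman integrating-factor derivation, with the absolute-continuity of $v(t)=\int_a^t K(\tau)u(\tau)\,d\tau$ handling the merely piecewise-continuous/$L^1$ data correctly. You are also right that the factor $u(\tau)$ inside the integral of the stated conclusion is a misprint -- the cited source (and the use made of the lemma later in the stability theorem) requires the bound $u(t)\leq f(t)+\int_a^t f(\tau)K(\tau)\exp\bigl\{\int_\tau^t K(s)\,ds\bigr\}\,d\tau$, which is exactly what your proof establishes.
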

\begin{thm}
Assume that we are given the following fractional order system with non-zero initial condition, $\textsc{x}(t_0)=\textsc{x}_0 :$ 
\begin{align}\label{eq:nD_theory}
{}_{t_0}^\textsc{c} D_t^\alpha \textsc{x}(t) = A\textsc{x}(t) + \mathbb{H}\left[\textsc{x}(t) \right]
\end{align}
where $0 < \alpha \leq 1$, $A \in {R^{n \times n}}$ is a  constant matrix, $\textsc{x} = (x_1, x_2, . . . , x_n)^T \in {R^{n \times 1}}$ is the state vector of our fractional-order system, ${}_{t_0}^\textsc{c} D_t^\alpha \textsc{x}(t)$, which consists of the linear $A\textsc{x}(t)$ and nonlinear $\mathbb{H}\left[\textsc{x}(t) \right]$ parts. System (\ref{eq:nD_theory}) is locally asymptotically stable at its equilibrium point, $\textsc{x}^\star$, when 
$$| \arg(\lambda_i)|  > \alpha \dfrac{\pi}{2}, ~~\text{and}~~\mathop {\lim }\limits_{\textsc{x} \to \textsc{x}^\star} \frac{||~ {\mathbb{H}\left[\textsc{x}(t)\right]}~ ||}{{\left\| \textsc{x}-\textsc{x}^\star \right\|}} = 0$$
where $\lambda_i$ ($i=1,2,..,n $) is the eigenvalues of matrix $A$.
\end{thm}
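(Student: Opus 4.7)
The plan is to shift coordinates so that the equilibrium sits at the origin, convert the fractional initial-value problem into an equivalent integral (variation-of-constants) representation via the Laplace transform of Definition \ref{def:laplace}, and then combine the Mittag-Leffler decay estimate of Lemma \ref{3} with the Gronwall-type inequality of Lemma \ref{4} to force the perturbation to vanish. The spectral hypothesis $|\arg(\lambda_i)|>\alpha\pi/2$ is what powers the Mittag-Leffler decay (cf.\ Lemma \ref{lemma1}), while the sublinearity condition on $\mathbb{H}$ is what allows linearization to dictate the dynamics near $\textsc{x}^\star$.

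First I would set $\textsc{y}(t)=\textsc{x}(t)-\textsc{x}^\star$ and rewrite the system as ${}_{t_0}^{\textsc{c}} D_t^\alpha \textsc{y}(t)=A\textsc{y}(t)+\widetilde{\mathbb{H}}(\textsc{y}(t))$, where $\widetilde{\mathbb{H}}(\textsc{y})=\mathbb{H}(\textsc{y}+\textsc{x}^\star)+A\textsc{x}^\star$ still satisfies $\lim_{\textsc{y}\to 0}\|\widetilde{\mathbb{H}}(\textsc{y})\|/\|\textsc{y}\|=0$. Applying the Laplace transform term-by-term (Definition \ref{def:laplace}), solving the resulting algebraic equation for the transform of $\textsc{y}$, and inverting with the standard Mittag-Leffler identities yields the representation
\[
\textsc{y}(t)=E_{\alpha,1}\!\bigl[A(t-t_0)^\alpha\bigr]\textsc{y}(t_0)+\int_{t_0}^{t}(t-s)^{\alpha-1}E_{\alpha,\alpha}\!\bigl[A(t-s)^\alpha\bigr]\widetilde{\mathbb{H}}(\textsc{y}(s))\,ds.
\]
Next, since $|\arg(\lambda_i(A))|>\alpha\pi/2$, Lemma \ref{3} supplies a constant $r>0$ with $\|E_{\alpha,\beta}[A\tau^\alpha]\|\le r/(1+\|A\|\tau^\alpha)$ for $\beta\in\{1,\alpha\}$. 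Taking norms gives
\[
\|\textsc{y}(t)\|\le \frac{r\,\|\textsc{y}(t_0)\|}{1+\|A\|(t-t_0)^\alpha}+\int_{t_0}^{t}\frac{r(t-s)^{\alpha-1}}{1+\|A\|(t-s)^\alpha}\,\|\widetilde{\mathbb{H}}(\textsc{y}(s))\|\,ds.
\]

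Then I would exploit the sublinearity: for any $\varepsilon>0$ there exists $\delta>0$ with $\|\widetilde{\mathbb{H}}(\textsc{y})\|\le\varepsilon\|\textsc{y}\|$ whenever $\|\textsc{y}\|<\delta$. Choosing initial data small enough (relative to $r$) that the solution stays inside the $\delta$-ball on an initial interval and performing a standard continuity argument to extend this, the previous inequality takes exactly the hypothesis form of Lemma \ref{4} with $f(t)=r\|\textsc{y}(t_0)\|/(1+\|A\|(t-t_0)^\alpha)$ and kernel $K(s)=\varepsilon r(t-s)^{\alpha-1}/(1+\|A\|(t-s)^\alpha)$. The conclusion of Lemma \ref{4} then bounds $\|\textsc{y}(t)\|$ in terms of $f$ multiplied by a factor whose exponent is controlled by $\int_{t_0}^{t}K(s)\,ds$; because $f(t)\to 0$ as $t\to\infty$ and $\varepsilon$ can be made arbitrarily small, one concludes $\|\textsc{y}(t)\|\to 0$, which is the desired local asymptotic stability at $\textsc{x}^\star$.

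The hard part will be the Gronwall step: the weakly singular kernel $(t-s)^{\alpha-1}$ does not fit verbatim into classical Gronwall, and one must verify that the integral of the kernel is uniformly bounded so that the exponential factor in Lemma \ref{4} does not blow up as $t\to\infty$. A secondary technical nuisance is the bootstrap argument that confirms the trajectory really does remain inside the ball where $\|\widetilde{\mathbb{H}}(\textsc{y})\|\le\varepsilon\|\textsc{y}\|$ holds; this requires picking $\|\textsc{y}(t_0)\|$ small enough in a way that couples the Mittag-Leffler constant $r$, the spectral gap encoded in $|\arg(\lambda_i)|-\alpha\pi/2$, and the modulus of continuity of $\widetilde{\mathbb{H}}$ at the origin.
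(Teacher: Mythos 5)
Your proposal follows essentially the same route as the paper: Laplace transform to the Mittag--Leffler variation-of-constants representation, the decay bound of Lemma \ref{3} under $|\arg(\lambda_i)|>\alpha\pi/2$, the $\varepsilon$--$\delta$ use of the sublinearity of $\mathbb{H}$, and the Gronwall-type Lemma \ref{4}. The ``hard part'' you flag is exactly where the paper does its remaining work, namely splitting the convolution integral at $t/2$, substituting $s=t-\tau$, and using the resulting algebraic decay of the kernel so that the Gronwall factor stays bounded and the whole bound tends to $0$ as $t\to\infty$.
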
\label{th:nD_stability}
\begin{proof}
The equation (\ref{eq:nD_theory}) can be written in the following form after taking Laplace transform from it.

$$s^\alpha \textsc{X}(s) - s^{\alpha-1}\textsc{x}_0 = A \textsc{X}(s) + L\left\{ \mathbb{H}[\textsc{x}(t)] \right\}$$
Thus 
$$\textsc{X}(s)={(s^\alpha I - A)^{ - 1}}[{s^{\alpha  - 1}}{X_0} + L \left\{\mathbb{H}[(\textsc{x}(t)] \right\}],$$
where $I$ is an $n \times n$ identity matrix. After taking Laplace inverse transform from the above equation (Table \textsc{C}1 of \cite{matlob2019concepts}), we would have
$$\textsc{x}(t) = {E_{\alpha, 1 }}(A{t^\alpha })\textsc{x}_0 + \int\limits_0^t {{{(t - \tau )}^{\alpha  - 1}}{E_{\alpha ,\alpha }}(A{{(t - \tau )}^\alpha })\mathbb{H}[\textsc{x}(\tau)] d\tau } ,$$
Let $\textsc{x}^\star$ be the solution of system \eqref{eq:nD_theory} and 
$\lim_{\textsc{x} \to \textsc{x}^\star} \dfrac{||~ {\mathbb{H}\left[\textsc{x}(t)\right]}~ ||}{{\left\| \textsc{x}-\textsc{x}^\star \right\|}} = 0$. Therefore
$$\forall \varepsilon>0 ~ \exists \delta_0 >0 ~ \text{s.t} ~ \left\| \textsc{x}(t)-\textsc{x}^\star \right\|< \delta_0 ~~\Rightarrow ~~ 
||~ {\mathbb{H}\left[\textsc{x}(t)\right]}~ || < \varepsilon \left\| \textsc{x}-\textsc{x}^\star \right\| $$
Now let $\delta $ be chosen arbitrarily subject to $0<\delta<\delta_0$ and consider solutions for which
$ \left\| \textsc{x}_0 \right\| <\delta $. From lemma \eqref{3}
$$ \exists ~ r_0, r \in R^+ ~ \text{s.t}~ 
\left\| \textsc{x}(t)- \textsc{x}^\star \right\| \leqslant \frac{{{r_0}\left\| {{\textsc{x}_0}} \right\|}}{{1 + \left\| A \right\|{t^\alpha }}} +\int\limits_0^{{t}} \frac{ \varepsilon r_0 r (t - \tau )^{\alpha  - 1} }{ \left( {1 + \left\| A \right\|(t - \tau )^\alpha} \right)} \left\| {\textsc{x}(\tau)}-\textsc{x}^* \right\| d\tau,$$
Using Grönwall's inequality Lemma (\ref{4}) and letting $\epsilon=\varepsilon r_0 r \delta_0 $, we would have

$$\left\| {\textsc{x}(t)}- \textsc{x}^* \right\| \leqslant \frac{{{r_0 }\left\| {{\textsc{x}_0}} \right\|}}{{1 + \left\| A \right\|{t^\alpha }}} +
\int\limits_0^{{t}} {\frac{{{\epsilon}\left\| {\textsc{x}(t)} \right\|}}{{1 + \left\| A \right\|{\tau ^\alpha }}}}
{\frac{{  {{{ (t - \tau )}^{\alpha  - 1}}}}}{{(1 + \left\| A \right\|{{(t - \tau )}^\alpha )}}}} 
\times 
exp\left\{{\int\limits_\tau^{{t}}{\frac{{ {{{ (t - s )}^{\alpha  - 1}}} }}{{(1 + \left\| A \right\|{{(t - s )}^\alpha )}}}}}ds
\right\}
d\tau $$
$$\leqslant \frac{{{r_0}\left\| {{\textsc{x}_0}} \right\|}}{{1 + \left\| A \right\|{t^\alpha }}}+\int\limits_0^{{t}} {\frac{{{\epsilon}\left\| {{\textsc{x}_0}} \right\|{{(t - \tau )}^{\alpha  - 1}}}}{{(1 + \left\| A \right\|{\tau ^\alpha }){{(1 + \left\| A \right\|{{(t - \tau )}^\alpha })}^{1-1/\alpha \left\| A \right\|}}}}} d\tau,$$
The integral equals to the sum of the two parts
$$\int\limits_0^{t} {\frac{{{\epsilon}\left\| {{\textsc{x}_0}} \right\|{{(t-\tau )}^{\alpha  - 1}}}}{{(1 + \left\| A \right\|{\tau ^\alpha }){{(1 + \left\| A \right\|{{(t-\tau )}^\alpha })}^{1 - 1/\alpha \left\| A \right\|}}}}}d\tau = \int\limits_0^{t/2} {\frac{{{\epsilon}\left\| {{\textsc{x}_0}} \right\|{{( t-\tau )}^{\alpha  - 1}}}}{{(1 + \left\| A \right\|{\tau ^\alpha }){{(1 + \left\| A \right\|{{(t-\tau )}^\alpha })}^{1 - 1/\alpha \left\| A \right\|}}}}}d\tau $$
$$+
\int\limits_{t/2}^{t} {\frac{{{\epsilon}\left\| {{\textsc{x}_0}} \right\|{{(t - \tau )}^{\alpha  - 1}}}}{{(1 + \left\| A \right\|{\tau ^\alpha }){{(1 + \left\| A \right\|{{(t-\tau )}^\alpha })}^{1 - 1/\alpha \left\| A \right\|}}}}}d\tau  \hfill \\$$
Since $\alpha<1$ and $(t - \tau ) \geqslant \tau $ when 
 $ \tau  \in [0,t/2] $, we would have 
 $$
  \int\limits_0^{t/2} {\frac{{{\epsilon}\left\| {{\textsc{x}_0}} \right\|{{(t - \tau )}^{\alpha  - 1}}}}{{(1 + \left\| A \right\|{(t-\tau) ^\alpha }){{(1 + \left\| A \right\|{{\tau }^\alpha })}^{1 - 1/\alpha \left\| A \right\|}}}}}d\tau  \hfill \\
   \leqslant \int\limits_0^{t/2} {} \frac{{{\epsilon}\left\| {{\textsc{x}_0}} \right\|{{(\tau )}^{\alpha  - 1}}}}{{(1 + \left\| A \right\|{\tau ^\alpha }){{(1 + \left\| A \right\|{{\tau }^\alpha })}^{1 - 1/\alpha \left\| A \right\|}}}}d\tau \\
   $$
Similarly
$$\int\limits_{t/2}^t {\frac{{{\epsilon}\left\| {{\textsc{x}_0}} \right\|{{(t - \tau )}^{\alpha  - 1}}}}{{(1 + \left\| A \right\|{{\tau }^\alpha }){{(1 + \left\| A \right\|{{(t - \tau )}^\alpha })}^{1 - 1/\alpha \left\| A \right\|}}}}}d\tau \leqslant
\int\limits_{t/2}^t {\frac{{{\epsilon}\left\| {{\textsc{x}_0}} \right\|{{(t - \tau )}^{\alpha  - 1}}}}{{(1 + \left\| A \right\|{{(t - \tau )}^\alpha }){{(1 + \left\| A \right\|{{(t - \tau )}^\alpha })}^{1 - 1/\alpha \left\| A \right\|}}}}} d\tau $$
$$= \int\limits_{t/2}^t {\frac{{{\epsilon}\left\| {{\textsc{x}_0}} \right\|{{s }^{\alpha  - 1}}}}{{(1 + \left\| A \right\|{{s }^\alpha }){{(1 + \left\| A \right\|{{s}^\alpha })}^{1 - 1/\alpha \left\| A \right\|}}}}} ds $$
By substituting $ s $ for $(t - \tau )$ and  by considering  $| \arg(\lambda_i)|  > \alpha \dfrac{\pi}{2} $, we would have $ \alpha \left\| A \right\|>1$, resulting in the following inequality   
$$\left\| {\textsc{x}(t)}- \textsc{x}^* \right\| \leqslant \frac{{{r_0}\left\| {{\textsc{x}_0}} \right\|}}{{1 + \left\| A \right\|{t^\alpha }}}+ 2\int\limits_0^{t/2} \frac{{ {\epsilon}\left\| {{\textsc{x}_0}} \right\|}}{{{{(1+\left\| A \right\| \tau ^{\alpha})^{(2-1/\alpha \left\| A \right\|)} }}}} d\tau, $$
$$ = \frac{{{r_0}\left\| {{\textsc{x}_0}} \right\|}}{{1 + \left\| A \right\|{t^\alpha }}} +\frac{{2\epsilon\left\| {{\textsc{x}_0}} \right\|}}{{\alpha \left\| A \right\| - 1}} +\frac{{2\epsilon\left\| {{\textsc{x}_0}} \right\|}}{{(1 - \alpha \left\| A \right\|){{(1 + \left\| A \right\|(\frac{{{t }}}{2})}^\alpha)^{1 - 1/\alpha \left\| A \right\|}}}}
$$  
Since $\delta $ and $\delta_0 $ are arbitrarily small, we would have 
$$\mathop {\lim }\limits_{t \to \infty } 
\left(
\frac{{{r_0}\left\| {{\textsc{x}_0}} \right\|}}{{1 + \left\| A \right\|{t^\alpha }}} +\frac{{2\epsilon\left\| {{\textsc{x}_0}} \right\|}}{{\alpha \left\| A \right\| - 1}} +\frac{{2\epsilon\left\| {{\textsc{x}_0}} \right\|}}{{(1 - \alpha \left\| A \right\|){{(1 + \left\| A \right\|(\frac{{{t }}}{2})}^\alpha)^{1 - 1/\alpha \left\| A \right\|}}}} 
\right)
= 0 $$
Therefore, the solution of system (\ref{eq:nD_theory}) is asymptotically stable.
\end{proof}
\begin{rem}
Let $B=bcirc(A,C,...,C)\in {\mathbb{C}^{N \times N}} $then 
$$\lambda({B})= \left\{  \lambda (A + (N - 1)C),\underbrace {\lambda (A - C),...,\lambda (A - C)}_{N - 1} \right\}$$
\end{rem}

\subsubsection{Stability of FMHNN  Model}
 
 \begin{thm} \label{th:2D_stability}
The system  \eqref{eq:fmhnn_2_N} is asymptotically stable at its equilibrium point,  $  E^\star=[0, 0, \delta]$, if and only if one of the following conditions hold

\begin{itemize}
\item[i]. $ \eta >0 $ and $ \dfrac{{{\tau _{}}}}{{\sqrt \eta  }} < 2\cos (\dfrac{{\alpha \pi }}{2})$  
\item[ii]. $ \eta =0 $ and $\delta=-3/k$ 
\end{itemize}
where $\eta = (b_1 + k \delta -1)(b_4 - k \delta -1) - (b_3-k\delta)(b_2-k\delta)$ and $\tau = b_1 + b_4 + 2(k\delta-1)$.
\end{thm}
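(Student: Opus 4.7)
The plan is to linearize system \eqref{eq:fmhnn_2_N} at $E^\star$, cast it in the linear-plus-nonlinear form required by Theorem \ref{th:nD_stability}, and reduce the resulting $3\times 3$ spectral problem to the quadratic factor handled by Lemma \ref{lemma2}. A direct substitution confirms that $(0,0,\delta)$ is an equilibrium for every $\delta$: the third component forces $x_1=x_2$, after which $\tanh(0)=0$ kills the first two. Geometrically this means $E^\star$ lies on a one-dimensional continuum of equilibria along the $\varphi$-axis, so the Jacobian is forced to have at least one zero eigenvalue, a fact that will drive the structure of the proof.

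Using $\tanh'(0)=1$, I would write the Jacobian as
\begin{equation*}
A \;=\; \begin{pmatrix} b_1+k\delta-1 & b_2-k\delta & 0 \\ b_3-k\delta & b_4+k\delta-1 & 0 \\ 1 & -1 & 0 \end{pmatrix},
\end{equation*}
and verify that the nonlinear remainder $\mathbb{H}(\textsc{x})$ is $O(\|\textsc{x}-E^\star\|^{2})$, because $\tanh(x)-x=O(x^{3})$ and the bilinear terms $k\varphi(x_1-x_2)$ produce at worst quadratic remainders about $E^\star$; the small-$\mathbb{H}$ hypothesis of Theorem \ref{th:nD_stability} is therefore automatic. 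Expanding $\det(A-\lambda I)$ along the third column, which has only one nonzero entry $-\lambda$, factors the characteristic polynomial as $-\lambda\bigl[\lambda^{2}-\tau\lambda+\eta\bigr]$, so the spectrum is $\{0\}$ together with the two roots of the quadratic whose coefficients are the trace and determinant of the upper $2\times 2$ block, matching the $\tau$ and $\eta$ in the theorem. For case (i) I then apply Lemma \ref{lemma2} to this quadratic (with $a=-\tau$, $c=\eta$), producing exactly $\eta>0$ and $\tau/\sqrt{\eta}<2\cos(\alpha\pi/2)$; the remaining zero eigenvalue is tangent to the equilibrium manifold and contributes only a neutral drift along it, so asymptotic stability in the transverse plane suffices.

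The delicate part is case (ii). Setting $\eta=0$ collapses a second eigenvalue to zero, so $A$ becomes defective on a two-dimensional subspace and Lemma \ref{lemma2} no longer applies. Stability must then be read off the cubic terms, which I would do by a centre-manifold reduction: project onto the two-dimensional generalized zero-eigenspace of $A$, substitute $\tanh(x)=x-x^{3}/3+O(x^{5})$, and check that the reduced fractional dynamics are contractive. The algebraic condition $\delta=-3/k$ should emerge as the specific value at which the linear coefficient $k\delta$ appearing in $A$ is balanced against the $1/3$ from the cubic Taylor term of $\tanh$, thereby producing the displayed factor of three and guaranteeing asymptotic stability on the centre manifold. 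The main obstacle is therefore not the routine linear algebra but the careful handling of the unavoidable zero eigenvalue --- geometrically along the equilibrium line in case (i), and via a fractional-order centre-manifold computation in case (ii).
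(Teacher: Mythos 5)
Your case (i) is exactly the paper's argument: compute the Jacobian at $E^\star$, factor the characteristic polynomial as $\lambda\bigl(\lambda^{2}-\tau\lambda+\eta\bigr)$ with $\tau$ and $\eta$ the trace and determinant of the upper $2\times 2$ block, and apply Lemma \ref{lemma2}; your observation that the nonlinear remainder is of higher order (so the linearization governs local behaviour via Theorem \ref{th:nD_stability}) is consistent with the paper's framework, and your explicit acknowledgement of the zero eigenvalue coming from the line of equilibria is, if anything, more candid than the paper, which does not mention it.

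The genuine gap is case (ii). In the paper this case involves no cubic terms and no centre manifold: the condition is read off the \emph{linearization} under the specific parameterization of Table \ref{t1} ($b_1=-0.1$, $b_2=2.8$, $b_3=-3$, $b_4=4$), for which $\eta=5.1+1.7\,k\delta$; hence $\eta=0$ is equivalent to $k\delta=-3$, i.e.\ $\delta=-3/k$, the quadratic factor degenerates to $\lambda(\lambda+4.1)$, the spectrum is $\{0,0,-4.1\}$, and stability is concluded from Lemma \ref{lemma1} applied to the nonzero root. Your conjecture that the constant $3$ arises from balancing $k\delta$ against the $x^{3}/3$ term in the Taylor expansion of $\tanh$ is therefore wrong: it is simply $5.1/1.7=3$, an artifact of the example parameters, not of the nonlinearity. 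Beyond this misdiagnosis, your plan for (ii) is only a sketch: it would require a centre-manifold theory for Caputo fractional systems plus a contractivity estimate for the reduced dynamics, none of which is available among the paper's tools (Lemmas \ref{lemma1}--\ref{4}, Theorem \ref{th:nD_stability}), and nothing in the sketch actually produces the condition $\delta=-3/k$. As written, case (ii) is unproven. A shared caveat worth noting: since $\{(0,0,\varphi)\}$ is a continuum of equilibria, a zero eigenvalue is unavoidable (double in case (ii)), so Lemma \ref{lemma1} does not literally yield asymptotic stability of the isolated point $E^\star$; the paper glosses over this exactly as you do, and neither your proposal nor the paper's proof establishes the stated ``only if'' direction beyond the instability remark for $\eta<0$.
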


\begin{proof} The Jacobian matrix of the system \eqref{eq:fmhnn_2_N} is as follow
\begin{align}\label{eq2}
J\left[x_1, x_2, \varphi \right]=\begin{bmatrix}
-1 + b_1 \mathrm{sech}^2(x_1) + k\varphi  &  b_2\mathrm{sech}^2(x_2) -k\varphi & k(x_1-x_2)\\
b_3 \mathrm{sech}^2(x_1) - k\varphi & -1 + b_4\mathrm{sech}^2(x_2) + k\varphi & -k(x_1-x_2)\\
1& -1&0
\end{bmatrix}
\end{align}
which at its equilibrium, $E^\star=[0, 0, \delta]$, will be 
\begin{align}\label{genera}
J(E^\star) = 
\begin{bmatrix}
-1 + b_1 + k\delta & b_2 - k\delta &  0\\
  b_3 - k\delta & -1 + b_4 + k\delta &  0\\
1& -1&0
\end{bmatrix},
\end{align}
Thus, characteristic equation of matrix \eqref{genera} will be in the form of $p(\lambda)= \lambda p_{1}(\lambda)$ where
\begin{equation}\label{eq:p1_lambda}
p_1(\lambda)= \lambda^2 + [-b_1 - b_4 + 2(1-k\delta)] \lambda + (1-b_1 )(1-b_4 ) - b_2b_3 + (b_1+b_2+b_3+b_4-2) k \delta  
\end{equation}
Assuming  $\tau = -b_1 - b_4 + 2(1-k\delta)$ and $\eta = (1-b_1 )(1-b_4 ) - b_2b_3 + (b_1+b_2+b_3+b_4-2) k \delta $, we can re-write the characteristic equation in the form of
$$p(\lambda)= \lambda (\lambda^2 - \tau \lambda + \eta)$$
According to lemma \eqref{lemma2} then, the proof of (i) would be straightforward. Regarding case (ii), for $\eta = 0$ and $\delta = -3/k$, the characteristic equation would have one zero root, $\lambda_{1} = 0$, and one negative real root, $\lambda_{1} = -4.1$. Based on lemma  (\ref{lemma1}) therefore, $E^\star$ would be a stable point (Table \ref{t1}).
\end{proof}
\begin{table}[!h]
\begin{center}
\caption{Stability region of two-neuron \textsc{FMHNN} model (\ref{eq:fmhnn_2_N}) under $b_1=-0.1,~ b_2=2.8,~ b_3=-3,~\text{and}~ b_4=4$ assumption.}\label{t1}
\begin{tabular}{p{1.5cm}llcc}
\hline
\textbf{~~~\#} & \multicolumn{3}{c}{\textbf{Conditions on parameters ($\eta, \tau, \delta$)}} &  \textbf{Stability conditions} \\
\rowcolor{gray!20}
Case 1 & $\eta=0$  & --- & \hspace{1.2cm} $\delta=-3/k$  & Stable\\
Case 2 & $\eta>0$  & $\tau=0$ & \hspace{1.7cm} $\delta=-0.95/k$ & Stable \\
\rowcolor{gray!20}
Case 3 & $\eta>0$  & $\tau>0$ &  $-0.95/k<\delta<1.9512/k$ & Stable \\
Case 4 & $\eta>\tau^2 / 4$ & $\tau<0$ &  $ -0.95/k< \delta < 1. 9512/k$ & $\cos(\frac{\alpha\pi}{2}) >  \frac{\tau}{2\sqrt{\eta}}$\\
\rowcolor{gray!20}
Case 5 & $\eta <0$  & --- & $\delta > 1.9512/k$  ~\textsc{or} $~\delta < -3/k$ & Unstable \\
\hline
\end{tabular}
\end{center}
\end{table}
\begin{rem}
When $\eta <0$ then $\tau^2-4\eta>0$ and $\delta > 1.9512/k  ~\;\text{or}\; ~\delta < -3/k$. Therefore,  $\lambda_{1,2}$  has at least one positive real root, which shows $E^\star$ is an unstable point. 
\end{rem}

\subsubsection*{\textbf{Specified Scenario: Example I}}
Same as Chen et al \cite{chen2019coexisting}, we set $b_1=-0.1,~ b_2=2.8,~ b_3=-3 ~\text{and} ~b_4=4$ in system (\ref{eq:fmhnn_2_N}). At the equilibrium point thus, the characteristic equation \eqref{eq:p1_lambda} reduces to
$$P(\lambda) =\lambda\left[\lambda^2-(1.9+2k\delta)\lambda+5.1+1.7k\delta\right],$$

Through assumption of $\tau = -(1.9+2k\delta), ~\eta = 5.1+1.7k\delta$, and $k = 0.15$ in Theorem (\ref{th:2D_stability}), Table (\ref{t1}) simplifies into Table (\ref{t2}). Note that the stability criteria of model (\ref{eq:fmhnn_2_N}) is calculated in upper and lower bound of $\delta$ value.

 \begin{table}[!h]
\begin{center}
\caption{The amount of eigenvalues for the stability region according table (\ref{t1}) when $k=0.15$ in FMHNN model (\ref{eq:fmhnn_2_N}).}
\label{t2}
\begin{tabular}{l|ccccc}
\hline
\multirow{2}{0.7cm}{\textbf{~~\#}} &
\multirow{2}{0.5cm}{~~~\textbf{$\delta$}} & 
\multirow{2}{2.4cm}{\textbf{Equilibrium}} & 
\multirow{2}{3cm}{\textbf{Eigenvalues $\lambda_{1,2}$}} & 
\multicolumn{2}{c}{\textbf{Could be stable in}}
\\
& & & & integer case? & fractional case? \\
\hline
\rowcolor{gray!20}
Case 1 & $-20$&$(0, 0, -20)$ & \hspace{0.2cm} $0.00, -4.10$ & No & No \\
Case 2 & $-6.3$&$(0, 0, -6.3)$ &\hspace{0.2cm} $ 0.00 \pm 1.87i$ & No & \color{RubineRed}{Yes}\\
\rowcolor{gray!20}
Case 3 & $-18$&$(0, 0, -18)$ &$ -3.35, -0.15$ & Yes & Yes \\
Case 3 & $-8.0$&$(0, 0, -8.0)$ &$-0.25 \pm 1.73i$ & Yes & Yes \\
\rowcolor{gray!20}
Case 4 & $-5.0$&$(0, 0, -5.0)$ &\hspace{0.3cm}$0.20 \pm 1.95i$ & No & \color{RubineRed}{Yes}\\
Case 4 & ~~$10$&$(0, 0, ~~10)$ &\hspace{0.3cm}$2.45 \pm 1.28i$ & No & \color{RubineRed}{Yes}\\
\rowcolor{gray!20}
Case 5 & ~~$14$&$(0, 0, ~~14)$ &\hspace{0.3cm}$3.85, ~~ 2.25$ & No & No\\
Case 5 & $-25$&$(0, 0, -25)$ &\hspace{0.3cm}$0.21, -5.82$ & No & No\\
\hline
\end{tabular}
\end{center}
\end{table} 

 \subsubsection{Stability of FMHNN  Model with Ring Structure}

\begin{thm}
\label{th:stability_nSub}
The system \eqref{ndim} is locally asymptotically stable at its equilibrium point, $\textsc{X}^\star=(0, 0, \phi^\star)$, when $1<n < \left(2p(d+1)+d \right)/d$ and $0 < \alpha <1$ (Table \ref{t3j}).
 \end{thm}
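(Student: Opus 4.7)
The plan is to linearise \eqref{ndim} at $\textsc{X}^\star=(0,0,\phi^\star)$, exploit the block-circulant structure of the resulting Jacobian to reduce the spectral analysis to a collection of $3\times 3$ problems indexed by a discrete Fourier mode, and then invoke Lemma \ref{lemma2} together with Theorem \ref{th:nD_stability}. As in Theorem \ref{th:2D_stability}, I take the equilibrium to be \emph{homogeneous}, i.e.\ every component of $\phi^\star$ equals a common value $\delta$, so that the Jacobian inherits the circular symmetry of the ring.

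First I would write out the Jacobian at $\textsc{X}^\star$. Using $\tanh'(0)=1$ and the vanishing of $x_{1i}-x_{2i}$ at equilibrium, each sub-network contributes a $3\times 3$ self block
\[
A_0 = \begin{bmatrix}
-1+b_1+k\delta-d & b_2-k\delta & 0 \\
b_3-k\delta & -1+b_4+k\delta & 0 \\
1 & -1 & 0
\end{bmatrix},
\]
and every pair $(i,j)$ with $0<|i-j|\le p$ (indices modulo $n$) contributes an identical coupling block $C$ whose only nonzero entry is $C_{11}=d/(2p)$. The full Jacobian $\mathcal{J}$ is therefore of the form $bcirc(A_0,C,\dots,C,0,\dots,0,C,\dots,C)$ in the sense of the definition on page~4.

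Next I would appeal to the spectral theorem for block-circulant matrices---of which the Remark immediately after Theorem \ref{th:nD_stability} is the full-coupling specialisation---to obtain $\sigma(\mathcal{J})=\bigcup_{j=0}^{n-1}\sigma(M_j)$, where $M_j=A_0+2\mu_j C$ and $\mu_j=\sum_{k=1}^{p}\cos(2\pi jk/n)$. Since the third column of every $M_j$ is identically zero, its characteristic polynomial factors as $\lambda(\lambda^2-\tau_j\lambda+\eta_j)$; the zero root is harmless, corresponding to motion along the one-parameter family $\{(0,0,\phi^\star)\}$ of equilibria. For $j=0$ one has $\mu_0=p$, so $M_0$ recovers the 2D Jacobian of Theorem \ref{th:2D_stability}, which is handled by that result. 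For $j\ne0$ the coefficient $\mu_j\in[-p,p)$ shifts the $(1,1)$ entry of $A_0$ by up to $2d$ downward, and Lemma \ref{lemma2} demands $\eta_j>0$ together with $-\tau_j/\sqrt{\eta_j}<2\cos(\alpha\pi/2)$.

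The crux of the argument is to locate the worst-case mode. Using the closed form $\sum_{k=1}^{p}\cos(2\pi jk/n)=\sin(p\pi j/n)\cos((p+1)\pi j/n)/\sin(\pi j/n)$, I would identify the $j$ that minimises $\mu_j$ and substitute the resulting maximal downward shift into $\tau_j,\eta_j$; a direct calculation should convert the Matignon inequality into exactly $n<(2p(d+1)+d)/d$. Once this is verified uniformly in $j$, the linear part of \eqref{ndim} satisfies the eigenvalue hypothesis of Theorem \ref{th:nD_stability}, while the nonlinear remainder from Taylor-expanding $\tanh$ and the bilinear term $k\varphi_i(x_{1i}-x_{2i})$ is manifestly $o(\|\textsc{X}-\textsc{X}^\star\|)$; Theorem \ref{th:nD_stability} then delivers local asymptotic stability. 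The main obstacle is precisely this cosine-sum extremisation: one must rule out that some intermediate $j$ produces a tighter constraint than the naive worst case, and confirm that the clean bound $n<(2p(d+1)+d)/d$ is the sharp algebraic translation of $\eta_j>0$ for all $j$.
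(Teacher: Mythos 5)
Your route---a genuine linearisation at a homogeneous equilibrium, Fourier/block-circulant reduction to $3\times 3$ mode blocks $M_j$, and Lemma \ref{lemma2} applied mode by mode---is the natural one, but it is not the paper's argument, and its decisive step, which you explicitly postpone (``a direct calculation should convert the Matignon inequality into exactly $n<(2p(d+1)+d)/d$''), cannot be carried out as hoped. Because you (correctly) keep the activation slopes and the memristive term in the Jacobian, every mode block carries the parameters $b_1,\dots,b_4$, $k$ and $\delta$. In particular your own mode $j=0$ has $\mu_0=p$, so the coupling contribution cancels and $M_0$ is exactly the two-neuron Jacobian $J(E^\star)$ of Theorem \ref{th:2D_stability}; its eigenvalues satisfy the sector condition only under the $\delta$- and $\alpha$-dependent conditions of that theorem (Table \ref{t1}: Case 4 needs $\cos(\alpha\pi/2)>\tau/(2\sqrt{\eta})$, and Case 5 is unstable for every $\alpha$). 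Hence no inequality involving only $n,p,d,\alpha$ can make all modes stable, the worst-mode extremisation cannot collapse to the clean bound $n<\left(2p(d+1)+d\right)/d$ independent of $\phi^\star$ (the statement, see Table \ref{t3j}, imposes no condition on $\phi^\star$), and your sketch, if completed honestly, proves a different, parameter-dependent criterion rather than the theorem as stated. That unverified crux is therefore a genuine gap, not a routine computation.

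For comparison, the paper proceeds quite differently: it never linearises the activation. It writes \eqref{ndim} as $AX+\mathbb{H}(X)$, where $A=bcirc(A_0,A_1,\dots,A_1)$ contains only the leak terms and the diffusive coupling ($A_0$ with entries $-1-d/p$, $-1$, and the $\varphi$ row; $A_1$ with the single entry $d/(2p)$), while $\mathbb{H}$ collects the $\tanh$ terms and the bilinear memristive terms. It then checks the remainder hypothesis $\lim_{X\to X^\star}\|\mathbb{H}(X)\|/\|X-X^\star\|=0$ of Theorem \ref{th:nD_stability} and reads the restriction on $n$ off the spectrum of $A$ via the block-circulant remark, the critical eigenvalue being $\bigl(d(n-1)-2p(d+1)\bigr)/(2p)$, which is negative precisely when $n<\left(2p(d+1)+d\right)/d$. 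So in the paper the bound originates from the leak-plus-coupling matrix alone, with the $b_i$ and $k\varphi$ terms relegated to the nonlinear remainder---which is exactly the structural point your reconstruction misses (and, if you pursue your version, the point you would have to confront: since $\tanh$ is linear to first order, keeping those terms out of the linear part is the delicate move on which the $\delta$-free bound rests).
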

\begin{proof} First we re-write the system \eqref{ndim} in the compact form as below

\begin{equation} \label{eq:comp}
    \mathbb{F}(X)= A X + \mathbb{H}(X)
\end{equation}
where $\mathbb{H}(X)= B_1 \tanh(\textsc{x}_1) + B_2 \tanh(\textsc{x}_2) + \varphi K X$,  $A=bcirc(A_0,A_1,...,A_1) \in {\mathbb{R}^{n \times n}}, B_1=[\beta_1]_{1 \times n}, B_2= [\beta_2]_{1 \times n}, \beta_1 = [b_1, b_3, 0], \beta_2=[b_2, b_4, 0], ~\text{and} ~K=bcirc(\kappa,0,..,0)_{1\times n}$  so that $A_0,A_1 \in {\mathbb{R}^{3 \times 3}}$  and
\begin{equation*}
A_0=\left[ {\begin{array}{*{20}{c}}
  { - 1 - \frac{d}{p}}&0&0 \\ 
  0&{ - 1}&0 \\ 
  1&{ - 1}&0 
\end{array}} \right],~~~
A_1=\left[ {\begin{array}{*{20}{c}}
  {\frac{d}{{2p}}}&0&0 \\ 
  0&0&0 \\ 
  0&0&0 
\end{array}} \right],~~~
\kappa =\left[ {\begin{array}{*{20}{c}}
   - k &k&0 \\ 
  k&-k&0 \\ 
  0&0&0 
\end{array}} \right]
\end{equation*}
Next we show that equation (\ref{eq:comp}) satisfies stability condition  at its equilibrium point, $\textsc{X}^\star=(0, 0, \phi^\star)$ because
\begin{equation*}
    \mathop {\lim }\limits_{X \to X^*} \frac{||\mathbb{H}(X)||}{||X -X^\star||}
    =
    \mathop {\lim }\limits_{X \to X^*} \frac{(|| B_1||~|| \tanh ({\textsc{x}_1}) || + ||B_2||~ || \tanh(\textsc{x}_2) || + || \varphi||~ || K ||~ || X ||)}{|| X -X^\star||}
    = 0
\end{equation*}
which this satisfies one of conditions of theorem (\ref{th:nD_stability}). The other condition also holds when $1<n < \left(2p(d+1)+d \right)/d$ and $0 < \alpha <1$ because
\begin{align*}
    \det(A-\lambda I)&= \left\{ {\lambda ({A_0} + (n - 1) \times {A_1}), ~\lambda ({A_0} - {A_1}) \times (n - 1)} \right\}\nonumber
    \\ 
    &=\left\{ {-1, ~0, ~\dfrac{d(n - 1)-2p(d + 1)}{2p},~ 1 - n,~ 0,~ (1 - n)({3d}/{2p}+1)} \right\}
\end{align*}
\end{proof}
\begin{table}[!h]
\begin{center}
\caption{Stability region of \textsc{FMHNN} model (\ref{ndim}). No condition is applied to the memristor variable, $\phi^\star$, at equilibrium.}\label{t3j}
\begin{tabular}{cccc}
\multicolumn{4}{c}{\textbf{Conditions on }} \\
\hline
\rowcolor{gray!20}
NO. of sub-networks &&& Fractional order  \\
$1<n < \left(2p(d+1)+d \right) / d$ & && $0<\alpha<1$ \\
\hline
\end{tabular}
\end{center}
\end{table}

\begin{rem}
\label{remark}
For $\alpha =1$, under the assumption of $1<n < \left(2p(d+1)+d \right)/d$ at equilibrium point, $\textsc{X}^\star=(0, 0, \phi^\star)$, the system (\ref{ndim}) would have limit cycles.
\end{rem}

\subsubsection*{\textbf{Specified Scenario: Example II}}
In addition to our parameter assumption in the  \textsc{FMHNN} model, we set $p =1$, and $d=0.5$. According to the compact form of model (\ref{eq:comp}) then, we would have $\beta_1 = [-0.1, -3, 0],~ \beta_2=[2.8, 4, 0]$ and
\begin{equation}
A_0=\left[ {\begin{array}{*{20}{c}}
  { - 1.1}&0&0 \\ 
  0&{ - 1}&0 \\ 
  1&{ - 1}&0 
\end{array}} \right],~
A_1=\left[ {\begin{array}{*{20}{c}}
 {0.05}&0&0 \\ 
  0&0&0 \\ 
  0&0&0 
\end{array}} \right],
\kappa =\left[ {\begin{array}{*{20}{c}}
   - 0.15 &0.15&0 \\ 
  0.15&-0.15&0 \\ 
  0&0&0 
\end{array}} \right]
\end{equation}
This parameter assumption satisfies the stability criteria represented by theorem (\ref{th:stability_nSub}) when 
\begin{equation}\label{eq:example_II}
    1 < n < (2(1)(1+0.5)+0.5) / 0.5 = 7
\end{equation}

\subsubsection{Numerical Analysis}
We used \texttt{FDE12} package \cite{garrappa2018numerical} to find the solution for the models. \texttt{FDE12} is a package written in Matlab that numerically solves the fractional ordinary differential equations. All the figures and numerical simulations are done by \href{https://www.mathworks.com/products/matlab.html}{MATLAB} programming language.

\section{Results} 

\textbf{FMHNN  Model}. We specified our theoretical calculations (Table. \ref{t1}) through the same parameterization as Chen et al \cite{chen2019coexisting}, calculated the eigenvalues for the model (\ref{eq:fmhnn_2_N}), and compared the integer, $\alpha=1$, and non-integer, $0<\alpha<1$, case with each other (Table. \ref{t2}). In the integer case, the null solution is not asymptotically stable for all cases but case 3 because of non-negative eigenvalues, while this is not the case when $0<\alpha<1$. For case 2, $\delta = -6.3$, the system  (\ref{eq:fmhnn_2_N}) is asymptotically stable when $\alpha \in (0,1)$ as lemma (\ref{lemma1}) holds for this range. With regard to case 4 that $-6.3 < \delta < 13$, we examined two equilibrium points ($\delta=-5$ and $\delta=10$). According to the amount of eigenvalues ($\lambda_{1,2}$), given in Table (\ref{t2}), at the equilibrium  points $E^\star_1=(0, 0, -5)$ and $E^\star_2=(0, 0, 10)$, the stability condition (\ref{lemma1}) satisfies when $\alpha<0.9348$ and $\alpha<0.307$ respectively. 
As for case 5, we similarly looked into two equilibrium points ($\delta=14$ and $\delta=-25$). The system is not stable in any of these two cases because the stability criteria are violated due to having positive real roots for the characteristic equation. The asymptotical stability comparison of integer and non-integer cases is summarised in Table (\ref{t2}). 

\begin{figure}[!h]  
\centering
\includegraphics[width=11cm, height=4.5cm]{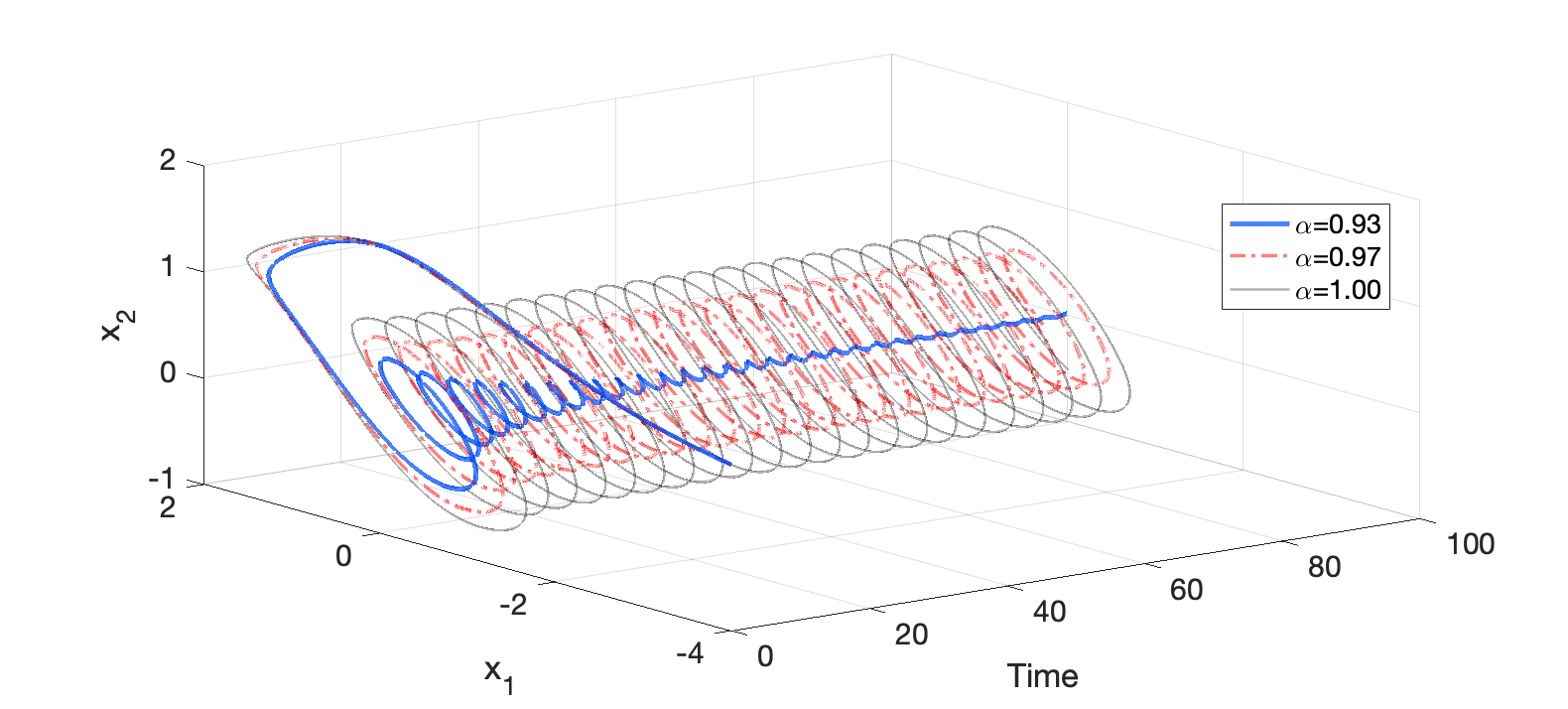}
\includegraphics[width=11.5cm,height=4.5cm]{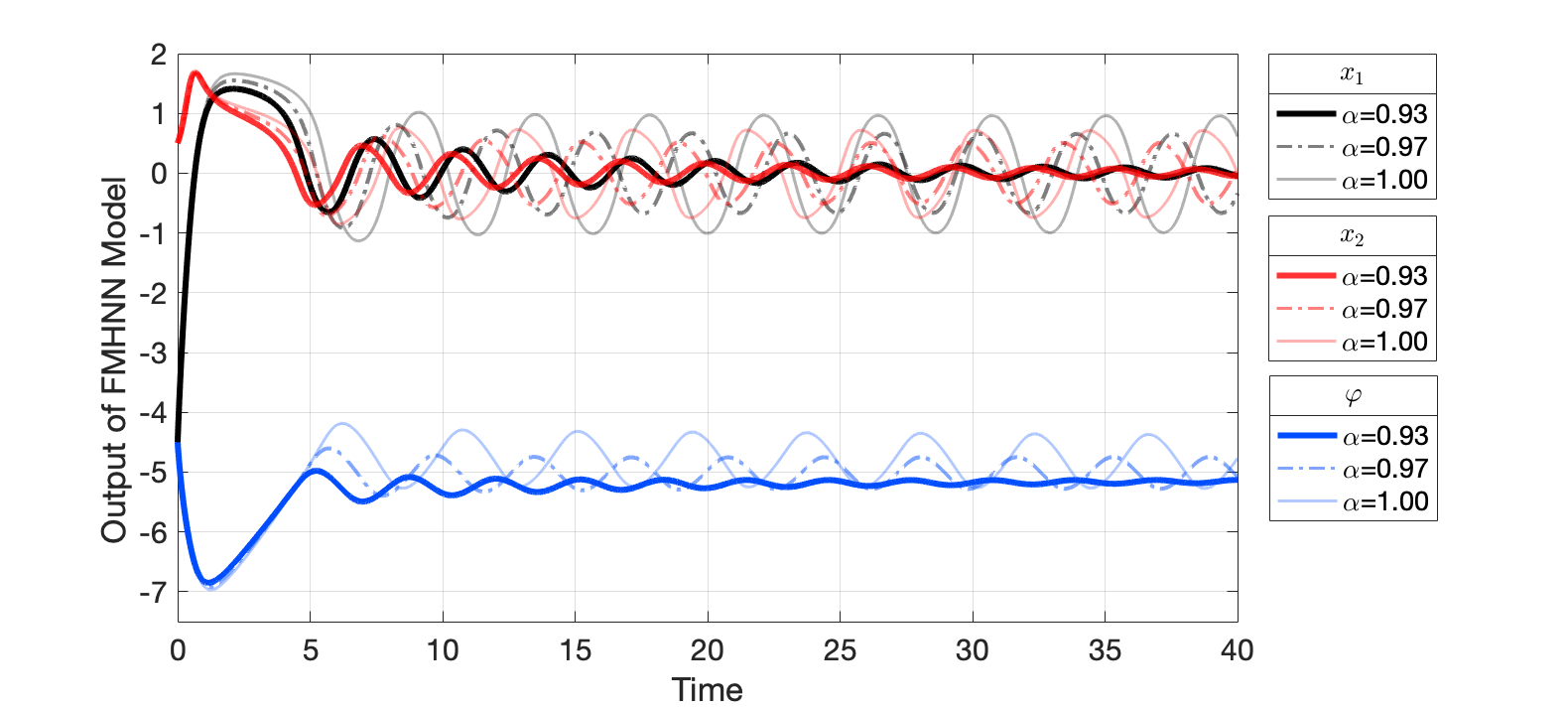}
\caption{ Phase diagrams and time series of \textsc{FMHNN} model (\ref{eq:fmhnn_2_N}) in Case 4, $\delta=-5$, with initial value $(x_1(0), x_2(0), \varphi(0)) = (-4.5,~ 0.5, -4.5)$ for three different fractional orders. Details are given in the legends. } \label{fig:case4}
\end{figure}
We next tested the calculations numerically. From the phase plane simulation for case 4, $\delta = -5$, we can see that the dynamical behavior of the model is unstable for the integer case, $\alpha=1$, and even for the non-integer case when $\alpha=0.97$ (Fig. \ref{fig:case4}, top), as the fractional order is greater than $2|arg(\lambda_{1,2})|/ \pi$, violating the stability criteria (\ref{lemma1}). However, when $\alpha=0.93$ which is somewhat less than 0.9348, the calculated value for the stability condition, the model output reaches the equilibrium point $E^\star=(0,0, -5)$ over time (Fig. \ref{fig:case4}, bottom). Regardless of the stability condition, the $\alpha$ parameter also dampens out the overall dynamics of the model. Looking at the time series simulation (Fig. \ref{fig:case4}, bottom), we can see the shading dashed lines, representing $\alpha=0.97$, have a smaller amplitude compared to the solid-shading lines, describing $\alpha=1.00$.

\begin{figure}[!h]  
\centering
\includegraphics[width=11cm, height=4.5cm]{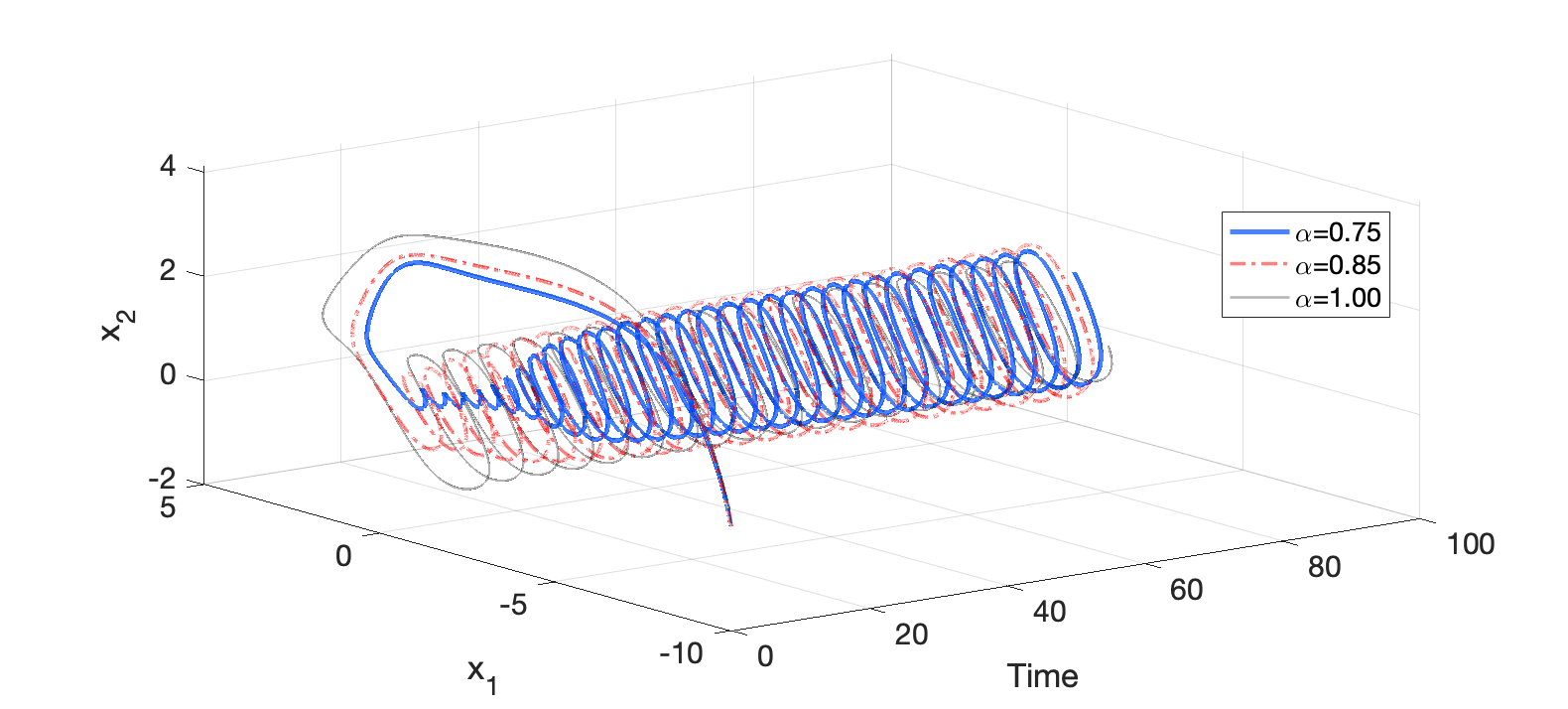}
\includegraphics[width=11.5cm,height=4.5cm]{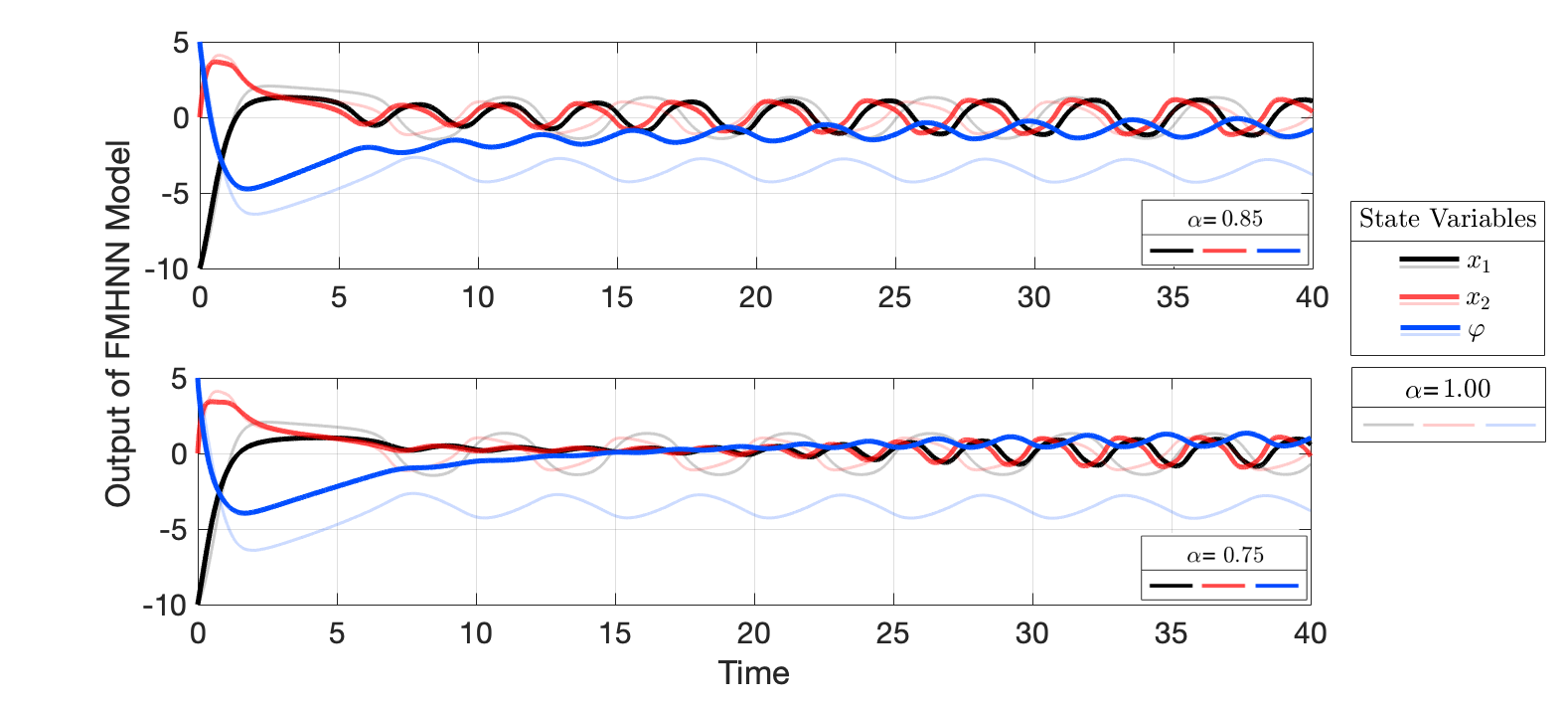}
\caption{ Phase diagrams and time series of \textsc{fmhnn} model (\ref{eq:fmhnn_2_N}) in case 5, $\delta=14$, with initial value $(x_1(0), x_2(0), \varphi(0)) = (-10,~ 10^{-6}, 5)$ for three different fractional orders. Details are given in the legends. } \label{fig:case5}
\end{figure}
As for the unstable region, case 5, $\delta=14$, we observed that the model is unstable in both integer and non-integer cases (Fig. \ref{fig:case5}). It is, however, interesting that the fractional order could cause a delay in the dynamical transition of the model output. For $\alpha=0.75$, the model starts with smooth destabilizing fluctuations after 20 seconds (Time $ > 20$), while under the same condition for the model with $\alpha=0.85$ the oscillations start about 4 times sooner, Time $\approx$ 5, (Fig. \ref{fig:case5}, bottom).


\begin{figure}[!h]  
\centering
\includegraphics[width=12cm, height=5cm]{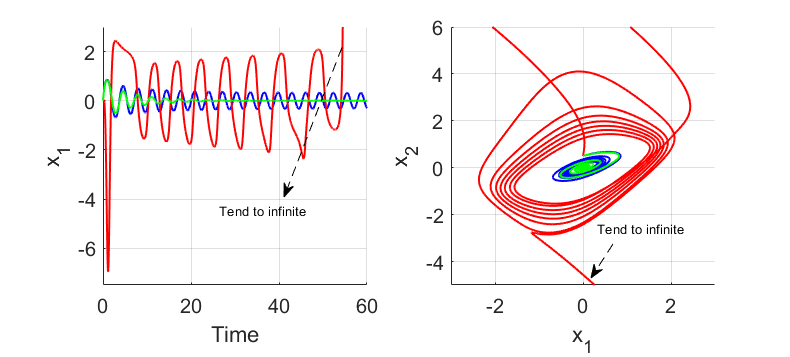}
\caption{Coexisting multi-stable patterns of the attraction regions in the \textsc{FMHNN} model (\ref{eq:fmhnn_2_N}), with initial values $ X_{\text{blue}}=(0,0.5,-5),~ X_{\text{red}}=(0,0.5,14)$, and $ X_{\text{green}}=(0,0.5,-6)$ for fractional order $ \alpha = 0.93 $.} \label{fig:bistaility}
\end{figure}

In our numerical analysis also, we noticed several striking coexisting multi-stable patterns and coexisting multiple attractors in the two-neuron-based FMHNN model (\ref{eq:fmhnn_2_N}). To elaborate upon that we performed three independent simulations using different initial values, showing the coexistence of several types of disconnected attractors due to periodic attractors with different periodicities, stable points, and unbounded divergent orbits which go to infinity. The phase plane and time series of coexisting multi-stable patterns of the attraction regions are shown in Fig. (\ref{fig:bistaility}). 

\textbf{FMHNN  Model with Ring Structure}.
In addition to considering the same model parameterization as Chen et al \cite{chen2019coexisting}, we set $(p, d) = (1, 0.5)$ and demonstrated that the stability of the model with ring structure (\ref{ndim}) depends upon the number of sub-network, n. For $n \geq 7$ we showed that the model is unstable, and we tested this through solving the model numerically for n = 7, Fig. (\ref{fig:N seven}). From this figure, we notice that the $\alpha$ parameter affects the amplitude of the oscillations. The smaller the alpha value, the smaller amplitude. 

We also performed another simulation for $n =5$ where the model parameterization meets the asymptotic stability conditions (\ref{th:stability_nSub}). In this case, the model is stable when $0<\alpha<1$, (Fig. \ref{fig:integer-nD}, top), and it would have stable limit cycles when $\alpha = 1$ (Fig. \ref{fig:integer-nD}, bottom). We also examined the impact of $\alpha$ parameter on the dynamical behavior of the model for $n=5$. Although the model would be stable for all $ \alpha$ values in (0, 1) interval, under different $\alpha$ values the trajectory of model output leads to various stationary points, Fig. (\ref{fig:NDIME}). This indicates the importance of choosing the 'right' $\alpha$ parameter in model implementation on real problems. Turing and parameter optimizations could give us an insight what the $\alpha$ value must be \cite{khalighi2021quantifying}.

\begin{figure}[!h]  
\centering
 \includegraphics[width=11cm,height=4.5cm]{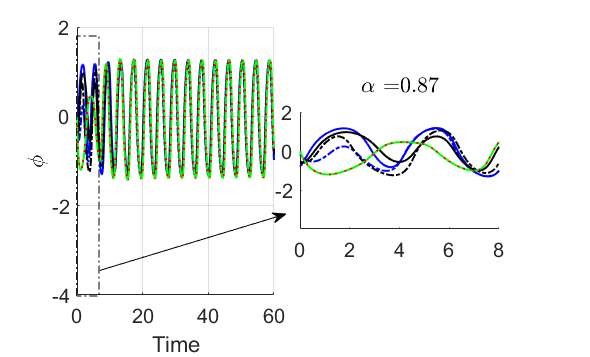}
\includegraphics[width=10cm,height=5cm]{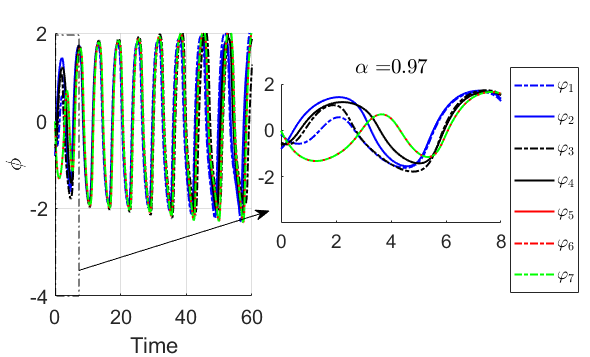}
\caption{Time series of \textsc{FMHNN} model with ring structure (\ref{ndim}) consisting of 7 sub-networks with $p=1$, $d=0.5$, $N=7$  and initial conditions $X_1=(-0.17,-0.80,-0.53,-0.63,-0.016,-0.11,-0.49),~X_2=(-0.39,-0.06,-0.41,-0.29,-0.98,-0.37,-0.34),~\phi=(-0.83,-0.40,-0.66,-0.43,-0.17,-0.12, -0.95)$ for $\alpha = 0.97$ and $\alpha = 0.87$.} \label{fig:N seven}
\end{figure}

\begin{figure}[!h]  
\centering
\includegraphics[width=15cm, height=8cm]{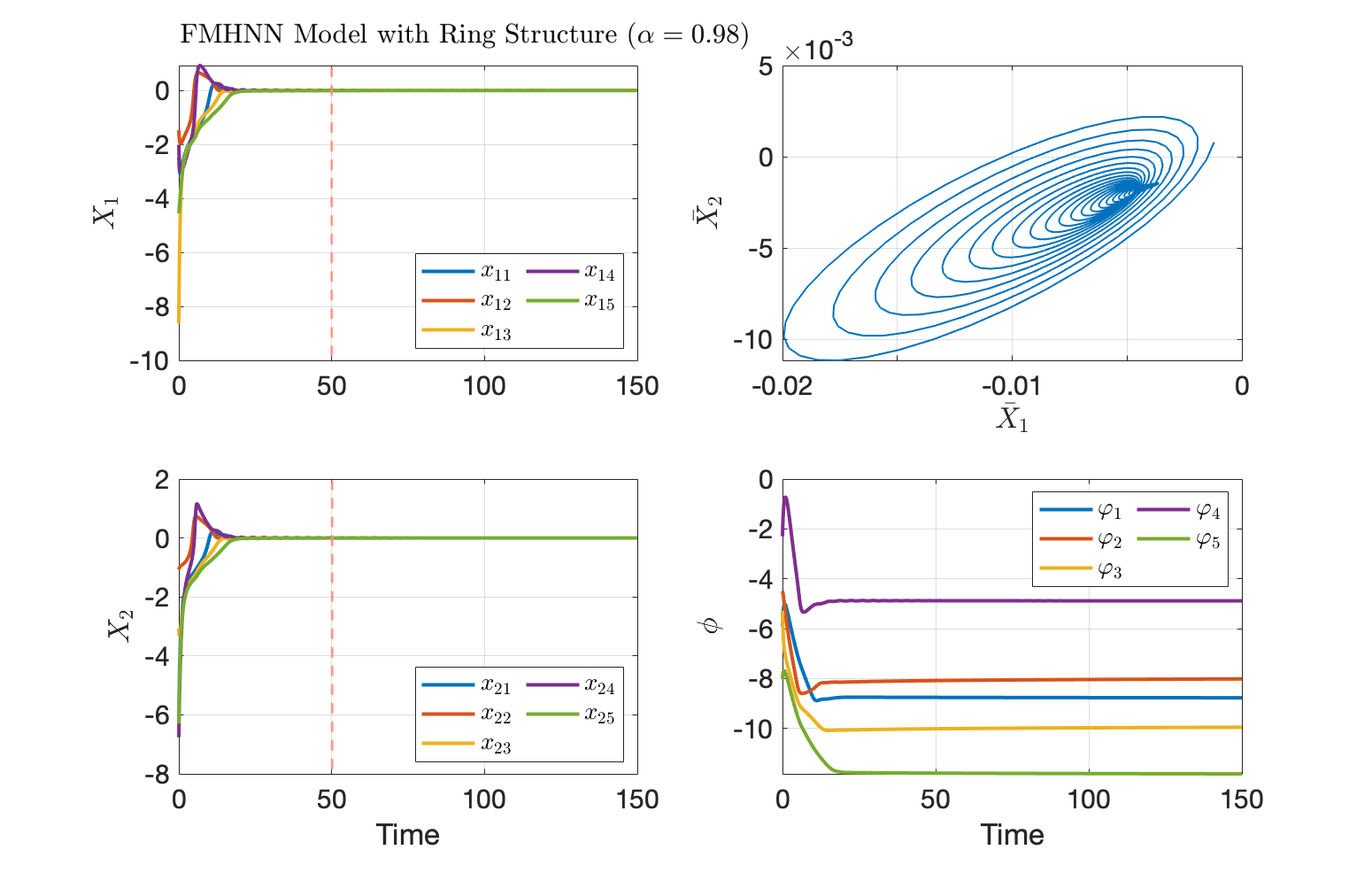}
\includegraphics[width=15cm, height=9cm]{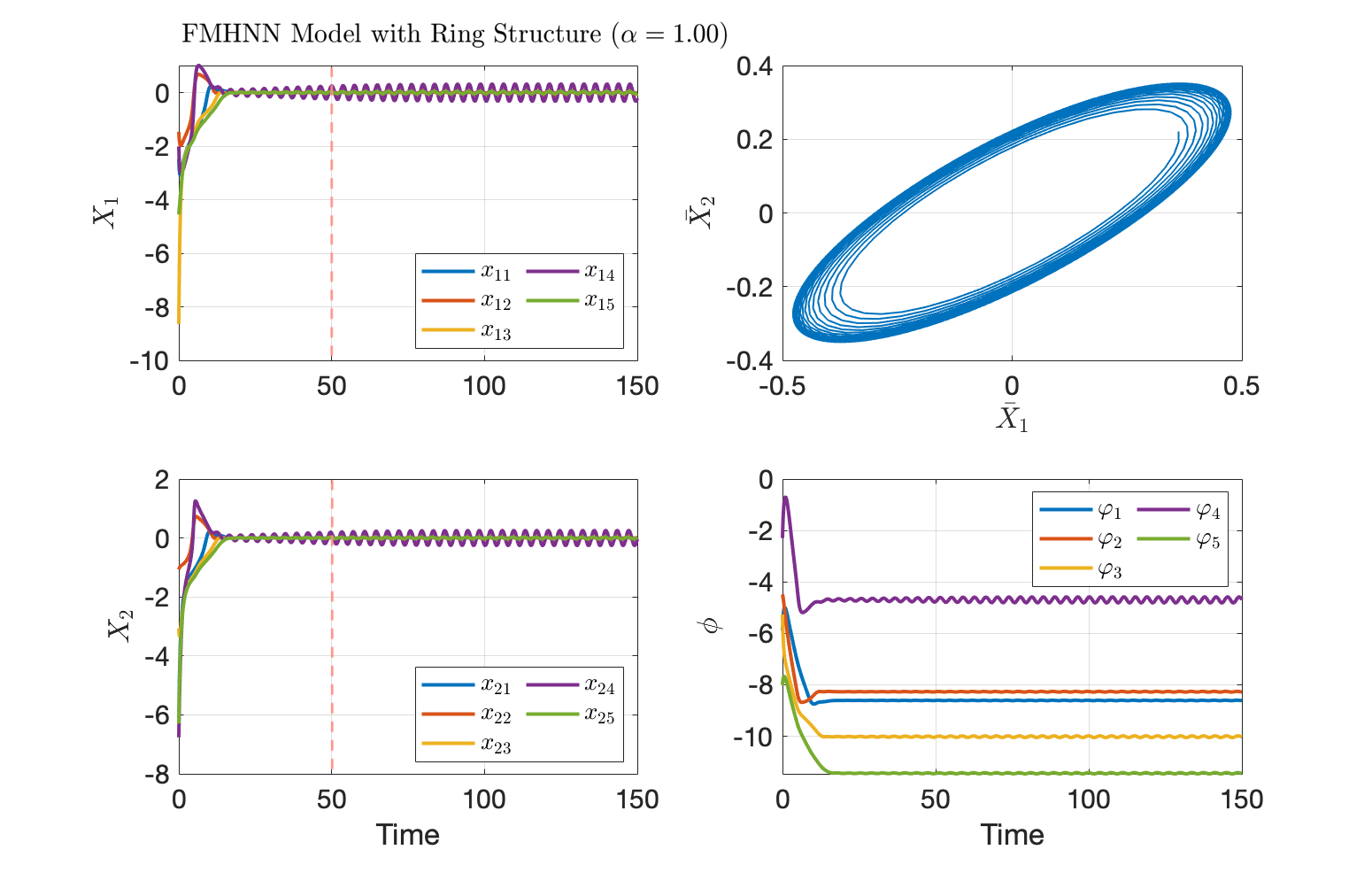}
\caption{Phase diagrams and time series of \textsc{FMHNN} model with ring structure (\ref{ndim}) consisting of 5 sub-networks with $p=1$, $d=0.5$ and initial condition $X_1=(-2.48,-6.12,-5.90,-1.46,-1.07),
~X_2=(-4.48,-8.64,-3.06,-5.27,-2.01),
~\phi=(-6.76,-2.30,-4.55,-6.30,-8.02)$ where $X_1 = [x_{1i}], ~ X_2 = [x_{2i}],~ \phi = [\varphi_{i}]$ for $i=1, \cdots, 5$. $\bar{X}_1=  \frac{1}{5}\sum^5_{i=1} x_{1i},~\text{and}~ \bar{X}_2= \frac{1}{5}\sum^5_{i=1} x_{2i}$ are average potentials between two neurons, calculated for $t \geq 50$. Further details are given in the titles and legends. } \label{fig:integer-nD}
\end{figure}

\begin{figure}[!h] 
\centering
\includegraphics[width=11cm, height=6cm]{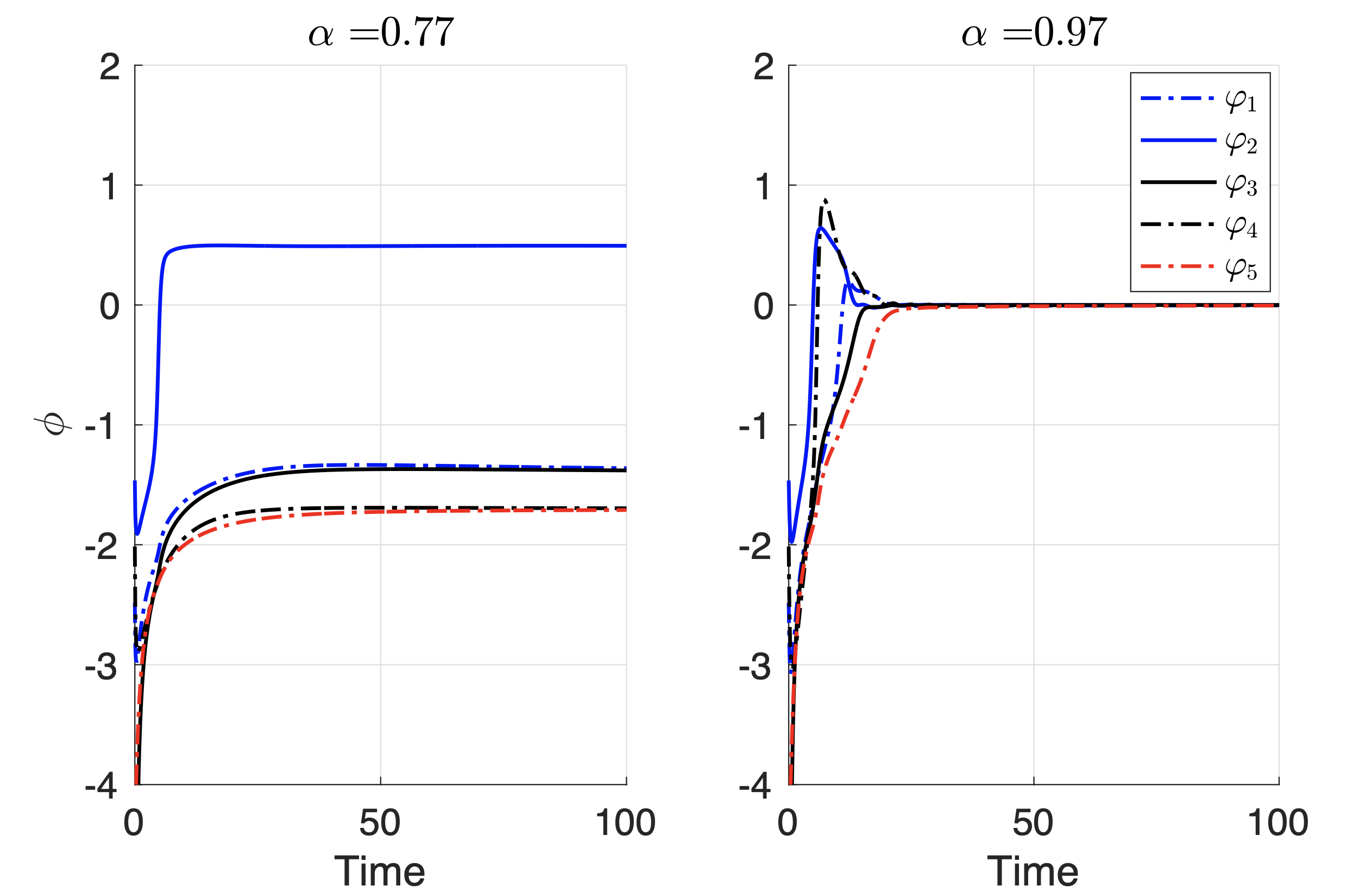}
\caption{Time series of \textsc{FMHNN} model with ring structure (\ref{ndim}) consisting of 5 sub-networks with $p=1$, $d=0.5$, $N=5$ and initial conditions $X_1=(-2.48,-6.12,-5.90,-1.46,-1.07), ~X_2=(-4.48,-8.64,-3.06,-5.27,-2.01),
~\phi=(-6.76,-2.30,-4.55,-6.30,-8.02)$ for $\alpha = 0.97$ and $\alpha = 0.87$} \label{fig:NDIME}
\end{figure}

\textbf{Bifurcation Analysis}. Here we would study the stability criteria further through investigating the influences of the initial conditions and the fractional-order parameter $\alpha$ on our \textsc{FMHNN} model (\ref{eq:fmhnn_2_N}).

Again, we set $b_1=-0.1,~ b_2=2.8,~ b_3=-3 ~\text{and} ~b_4=4$ in system (\ref{eq:fmhnn_2_N}), as assumed by Chen et al \cite{chen2019coexisting}.
Taking $\alpha$ as the bifurcation parameter when  $\alpha \in[0.96, 1]$ and the initial value $[0,10^{-6},0]$, we performed the bifurcation analysis. Figure (\ref{fig7}) shows the bifurcation diagram of the $\alpha - \phi$ plane, indicating that with the increase of $ \alpha $ value the trajectory of \textsc{FMHNN} evolves into unbounded behavior, chaos, eight-period cycles, four-period and finally to two-period cycles. Also according to the Lyapunov exponent shown in figure (\ref{fig7}), the system can not be asymptotically stable for the given interval value of $\alpha$. 
\begin{figure}[!h] 
\centering
\includegraphics[width=7.5cm,height=5.9cm]{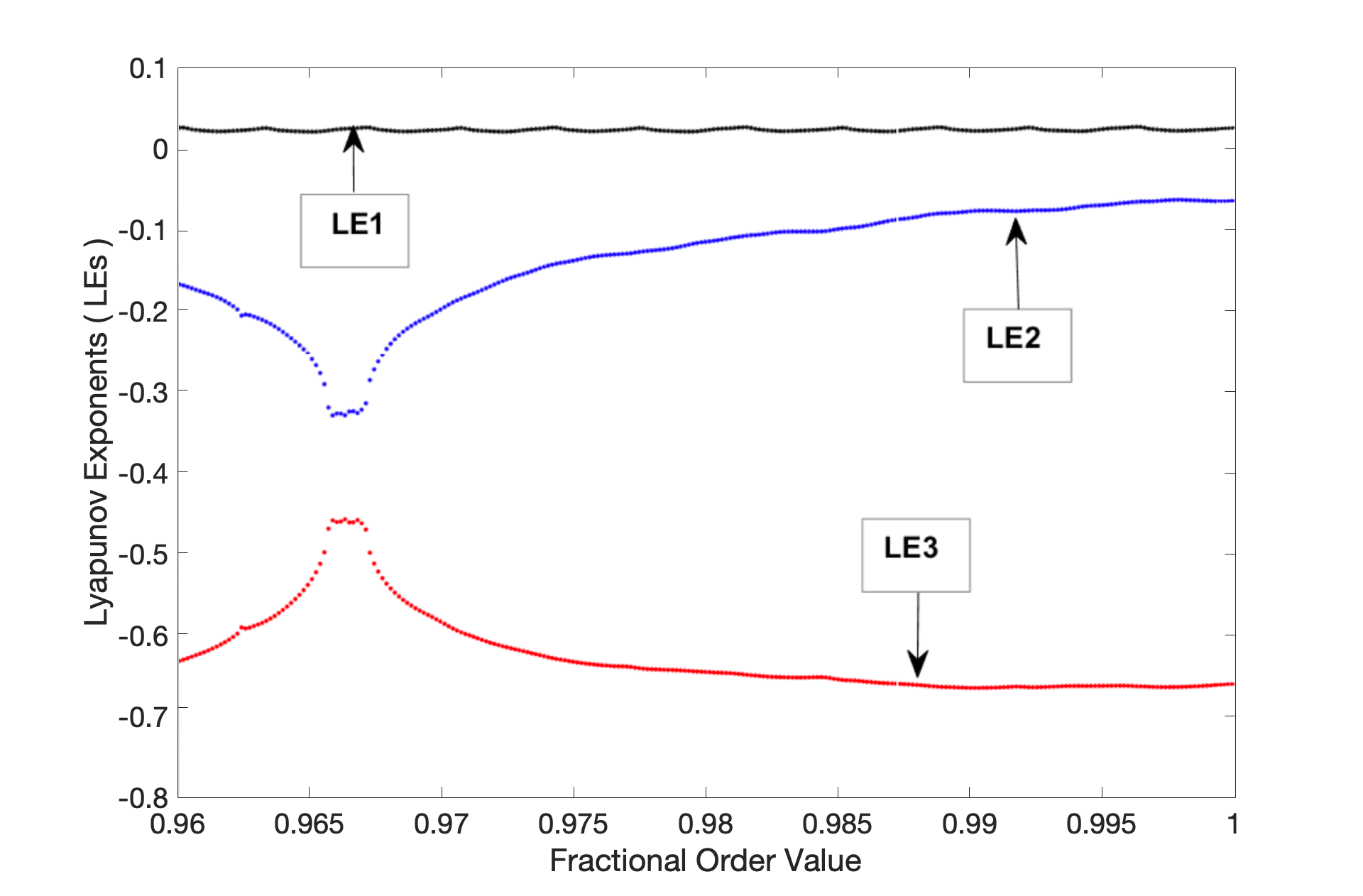}
\includegraphics[width=7cm,height=5.8cm]{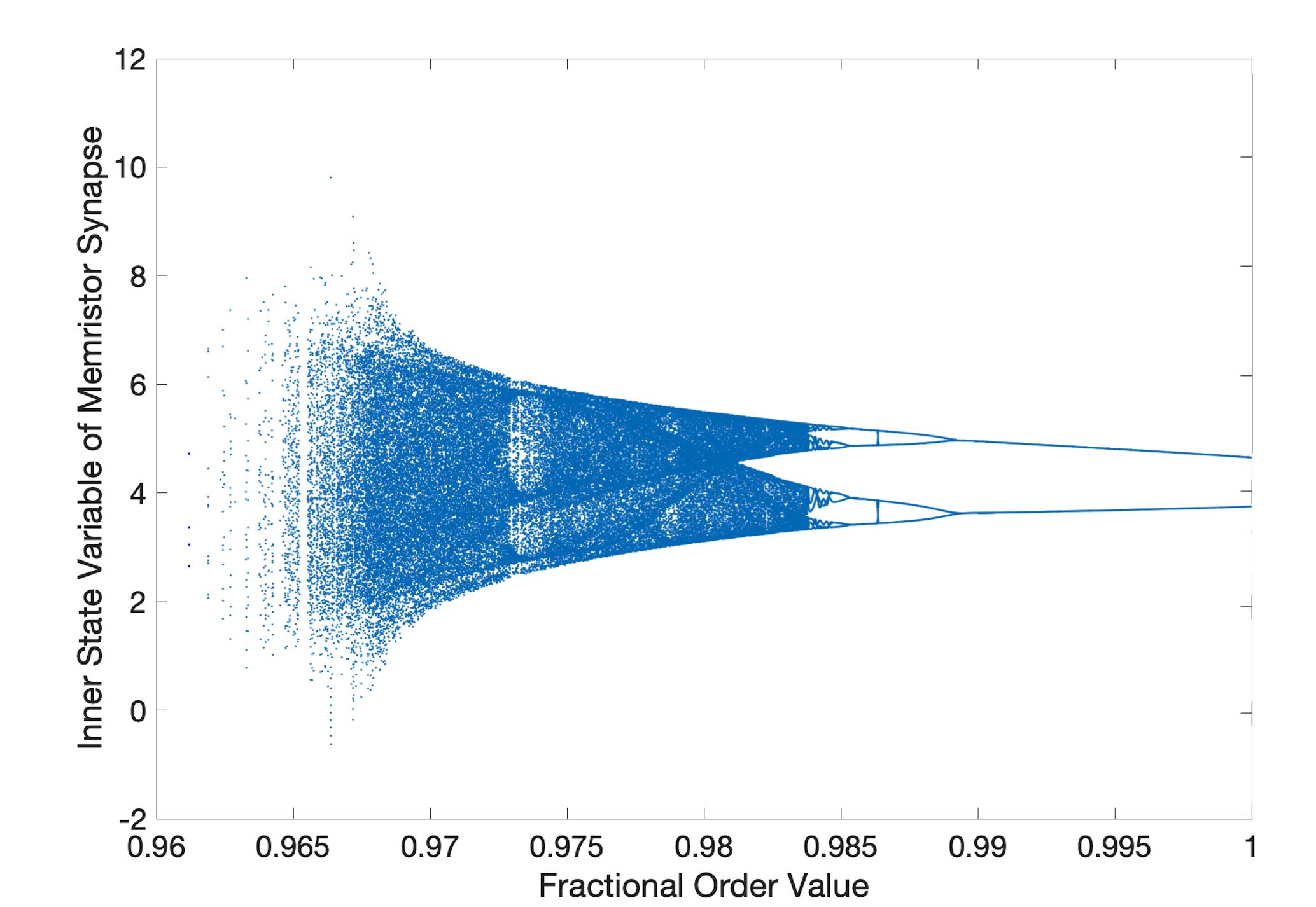}
\caption{The bifurcation diagram and Lyapunov exponents of \textsc{FMHNN} model (\ref{eq:fmhnn_2_N}) on the $\alpha - \phi$ plane over $0.96 \leq \alpha \leq 1$ interval.} 
\label{fig7}
\end{figure}

Next, we examined the impact of initial conditions on the dynamical transition of the model output by performing three independent simulations under different initial conditions: $X_{\text{blue}} = [0.5, -1, 1]$, the blue trajectory, $X_{\text{red}} = [0.5, -0.5, 0.5]$, the red trajectory, and $X_{\text{black}} = [0.5, -0.7, 0.7]$, the black trajectory. From  Fig. (\ref{fig:ALLBIF}), we observed that 
under the $X_{\text{red}}$ initial value, the trajectory of the \textsc{FMHNN} model (\ref{eq:fmhnn_2_N}) evolves from chaotic state into four-period cycles, then two-period and eventually into one-period cycles, while  under the $X_{\text{black}}$ initial value, we have different trajectories. The bifurcation patterns evolves from chaotic state into four-period cycles, two and then one-period, again two-period, and finally one-period cycles. Similar patterns is seen for the $X_{\text{blue}}$ initial value (Fig. \ref{fig:ALLBIF}). This indicates the initial conditions could have an essential influence on the bifurcation behavior of the model.

\begin{figure}[!h] 
\centering
\includegraphics[width=14.5cm, height=6cm]{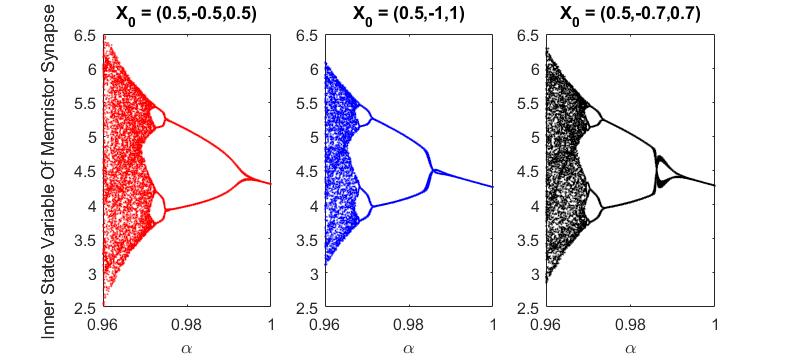}
\caption{The bifurcation diagram of \textsc{FMHNN} model (\ref{eq:fmhnn_2_N}), under three different initial conditions $X_{\text{red}}$ = (0.5, -0.5, 0.5), $X_{\text{blue}}$ = (0.5, -1, 1) and $X_{\text{black}}$ = (0.5, -0.7, 0.7).} \label{fig:ALLBIF}
\end{figure}

\section{Discussion and conclusion}

A memristor is an electrical component restricting or regulating the flow of electrical current in a circuit. This component adds the magnetic flux to the membrane potential of a neuron model, enabling the system to remember the previous amount of charge that has flowed through it. Our FMHNN model extends this memory characteristic to keep the effect of all previous steps in the system. Due to the incorporated memory feature, the model shows more residence to changes compared to the MHNN model. The memory concept, coming from fractional derivative indeed, dampens out fluctuations in the model dynamic. Increasing the memory feature, $\alpha \rightarrow 0$, would lead the system as a whole settles on an equilibrium quicker (Fig. \ref{fig:alpha_comp}). 

Our theoretical calculation indicates that the FMHNN model has a broader range in stability than the MHNN model. (Theory \ref{th:2D_stability}). 
Under the same parameterization as Chen et al \cite{chen2019coexisting}, we verified this. Both models have five stability regions, among which the MHNN model is stable in only one area, while the FMHNN model could be stable in three areas (Table \ref{t1}). Our numerical simulations also agree with this finding (Fig. \ref{fig:case4}), suggesting the memory concept could significantly influence the system dynamics.
\begin{figure}[!h] 
\centering
\includegraphics[width=14.5cm, height=8cm]{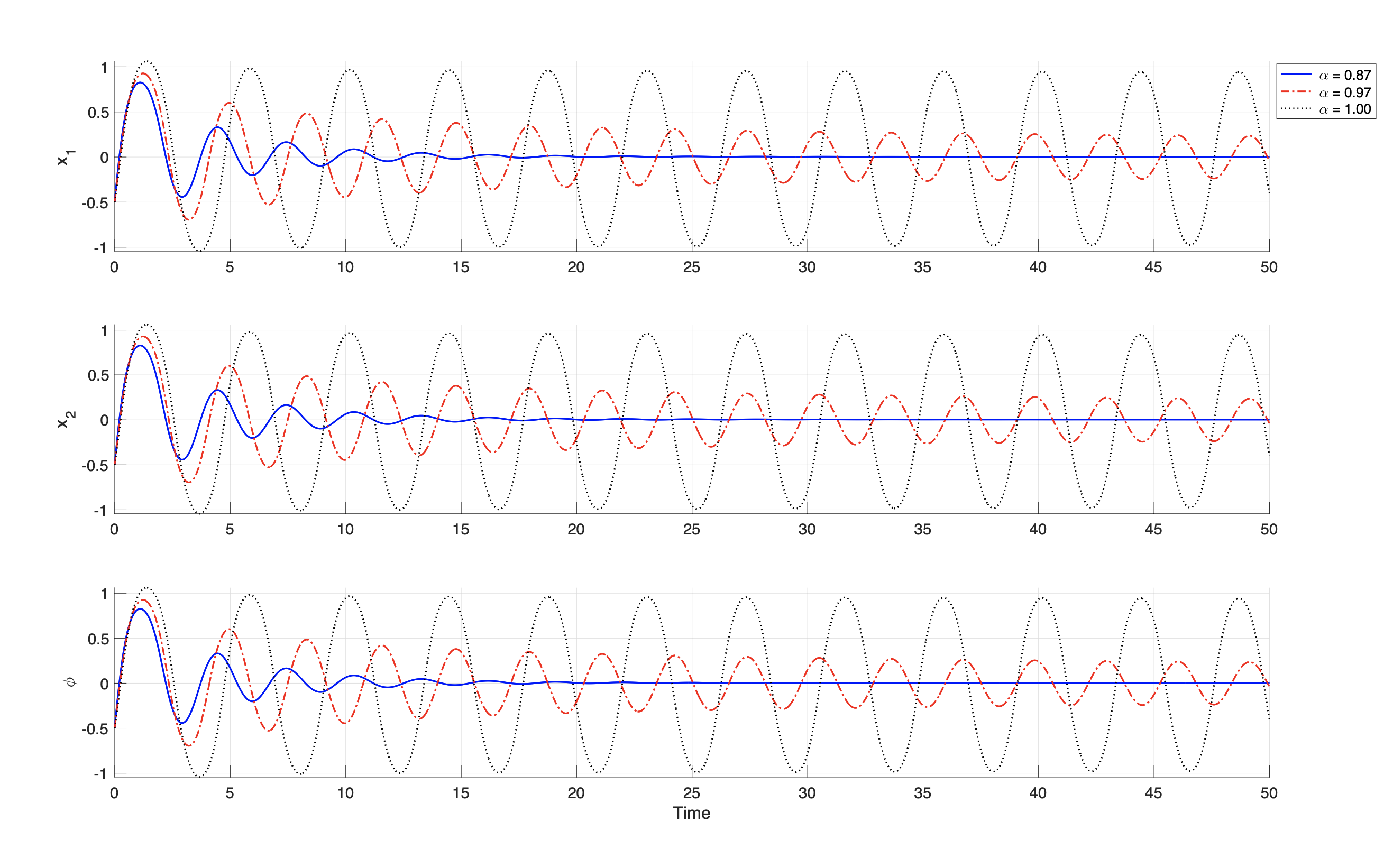}
\caption{Time series of \textsc{FMHNN} model with initial condition $ X_0 $ = (-0.5, 0.25, -5). The parameters are chosen from Chen et al \cite{chen2019coexisting}.} \label{fig:alpha_comp}
\end{figure}

It is noteworthy to notice that the FMHNN could not be stable for all ranges of $\alpha \in (0,1)$. 
To highlight this, we performed bifurcation analyses and estimated Lyapunov exponents. The outputs show that the model experiences several dynamical transitions from unbounded to non-chaotic behaviour by increasing $\alpha$ value from 0.96 to 1 (Fig. \ref{fig7}). When $\alpha =1$, the model is also unstable as the stability condition is violated. Moreover, our simulations shows that the dynamical transitions of model can be affected by changing initial points (Fig. \ref{fig:ALLBIF}).

Next, we expanded the previous model to a general fractional-order network using the ring topology, where each membrane is in ring structure with $n$ sub-networks, improving the synchronization within the synapse and increasing the network dimension. We obtained the stability criteria, highlighting the importance of the synchronization factor in the network. In effect, we found that the stability does not only depend upon the memory element, $\alpha$, but it depends upon the number of sub-networks, $n$, in membranes  (Table \ref{t3j}). For $\alpha=1$ also, the model would have stable limit cycles (Remark \ref{remark}). 

Within the same parameterization as the two-neuron network model, we tested our calculations for the fractional-order network including $n$ units of the two-neuron network model which are connected in ring topology. The units are coupled by the nearest neighbourhood neurons with the coupling strength of $d=0.5$. We found that the model cannot be stable when the network size exceeds $n=6$. To check this numerically, we performed several simulations summarized below. 
\begin{itemize}
    \item $n=5$ \& $0<\alpha < 1$, the system is stable (Fig. \ref{fig:integer-nD}),
    \item $n=5$ \& $\alpha = 1$, the system has a stable limit cycle (Fig. \ref{fig:integer-nD}),
    \item $n=7$, the system is unstable (Fig. \ref{fig:NDIME}).
\end{itemize}

Our simulations also suggest that the fractional-order value could affect the model dynamics significantly. In addition to dampening out the fluctuations in the model dynamics (Fig. \ref{fig:N seven}), the $\alpha$ value could shift the system as a whole settles on different equilibrium points (Fig. \ref{fig:NDIME}).

The number of sub-neurons, the units, plays a pivotal role in the stability condition. Neuronal networks could give us a better insight into the existing collective behaviours of neurons by embedding the synapses in models where nodes are coupled in different types of synchronization. Due to the memory characteristic and enriching synchronized features of the fractional-order rig structure network, we think FMHNN models could be employed to design high-density artificial neural networks to examine complex systems like the brain. Although this work makes a solid mathematical framework to control the FMHNN model via stability analyses, there is more room for improvement. Therefore, it would be interesting to evaluate the impact of lag synchronization in the FMHNN model or to consider other types of synchronization, such as partial synchronization, phase synchronization, lag synchronization, cluster synchronization, and so forth in the model.

\section*{Data availability statement}
No data were used to support this study.

\section*{Statements and Declarations}
The authors did not receive support from any organization for the submitted work. Also they declare that they have no known competing financial interests or personal relationships that could have appeared to influence the work reported in this paper.

\section*{Acknowledgment}
MA expresses his appreciation for many enlightening conversations with \textit{Prof. Andrew  J. Irwin} of Dalhousie University during the revision process.

\bibliographystyle{ieeetr}
\bibliography{refs.bib}

\end{document}